\documentclass[12pt]{article}
 \usepackage[a4paper]{anysize}\marginsize{2cm}{2cm}{2cm}{2cm}
 \linespread{1.2}
 \pdfpagewidth=\paperwidth \pdfpageheight=\paperheight
 \usepackage{amsfonts,amssymb,amsthm,amsmath,eucal,tabu,url}
 \usepackage{pgf}
 \usepackage{array}
 \usepackage{pstricks}
 \usepackage{pstricks-add}
\usepackage{pgf,tikz,pgfplots}
\pgfplotsset{compat=1.10}
 \usetikzlibrary{automata}
 \usetikzlibrary{arrows}
 \usepackage{indentfirst}
 \pagestyle{myheadings}


\theoremstyle{plain}
\newtheorem{thm}{Theorem}[section]
\newtheorem{theorem}[thm]{Theorem}

\newtheorem{proposition}[thm]{Proposition}

\newtheorem{conjecture}[thm]{Conjecture}

\theoremstyle{definition}
\newtheorem{definition}[thm]{Definition}
\newtheorem{remark}[thm]{Remark}
\newtheorem{example}[thm]{Example}

\newtheorem{thevarthm}[thm]{\varthmname}

\newenvironment{varthm*}[1]{\trivlist\item[]{\bf #1.}\it}{\endtrivlist}


\def\keywordname{{\bfseries Keywords}}%
\def\keywords#1{\par\addvspace\medskipamount{\rightskip=0pt plus1cm
\def\and{\ifhmode\unskip\nobreak\fi\ $\cdot$
}\noindent\keywordname\enspace\ignorespaces#1\par}}
\def\subclassname{{\bfseries Mathematics Subject Classification
(2010)}\enspace}
\def\subclass#1{\par\addvspace\medskipamount{\rightskip=0pt plus1cm
\def\and{\ifhmode\unskip\nobreak\fi\ $\cdot$
}\noindent\subclassname\ignorespaces#1\par}}

\def\P{\mathbb{P}}

\title{On $3$-syzygy and unexpected plane curves}
\author{Grzegorz Malara, Piotr Pokora\footnote{Corresponding Author} \,and Halszka Tutaj-Gasi\'nska}
\date{\today}
\begin{document}
\maketitle
\begin{abstract}
In this note we study  curves (arrangements) in the complex projective plane which can be considered as generalizations of \emph{free curves}. We construct families of  arrangements which are nearly free and possess interesting geometric properties. More generally, we study $3$-syzygy curve arrangements and we present examples that admit unexpected curves.

\keywords{line arrangements, conic arrangements, nearly freeness, almost freeness, 3-syzygy curves, unexpected curves}
\subclass{14N20, 14C20, 32S22}
\end{abstract}
\section{Introduction}\label{sec:intro}
In the recent years there is an increasing interest in free divisors in the complex projective plane. Let $C$ be a reduced plane curve of degree $d$ given by a homogeneous polynomial $f \in S := \mathbb{C}[x,y,z]$. Let us consider the following short exact sequence
$$0 \rightarrow \mathcal{D}_{0} \rightarrow \mathcal{O}_{\mathbb{P}^{2}_{\mathbb{C}}}^{3} \xrightarrow{(\partial_{x}f, \, \partial_{y}f, \, \partial_{z}f)} \mathcal{J}_{f}(d-1) \rightarrow 0,$$
where the sheaf $\mathcal{J}_{f}$ is the Jacobian ideal of $f$.
The sheaf $\mathcal{D}_{0}$ is reflexive and has rank two, therefore it is a vector bundle on $\mathbb{P}^{2}_{\mathbb{C}}$. 
\begin{definition}
A reduced plane curve $C \subset \mathbb{P}^{2}_{\mathbb{C}}$ is \emph{free}
with the exponents $(d_{1},d_{2}) \in \mathbb{N}^{2}$ and $0 \leq d_{1} \leq d_{2}$ if the associated vector bundle $\mathcal{D}_{0}$ splits as
$$\mathcal{D}_{0} = \mathcal{O}_{\mathbb{P}^{2}_{\mathbb{C}}}(-d_{1}) \oplus \mathcal{O}_{\mathbb{P}^{2}_{\mathbb{C}}}(-d_{2}).$$
\end{definition}
One of the most important questions in the context of free divisors is the famous Terao conjecture \cite[Conjecture 4.138]{OT92}. Assume that $\mathcal{L}$ is an arrangement of $d$ lines in $\mathbb{P}^{2}_{\mathbb{C}}$. We define the Levi graph $G$ for $\mathcal{L}$ as a bipartite graph having one vertex $x_{i}$ per each line $\ell_{i}$ and one vertex $y_{j}$ per each intersection point $p_{j} \in {\rm Sing}(\mathcal{L})$ with an edge $e_{ij}$ if $p_{j}$ is incident with $\ell_{i}$. 
\begin{conjecture}[Terao]
Let $\mathcal{L}_{1}, \mathcal{L}_{2} \subset \mathbb{P}^{2}_{\mathbb{C}}$ be two line arrangements such that their associated Levi graphs $G_{1}$ and $G_{2}$ are isomorphic. Then $\mathcal{L}_{1}$ is free if and only if $\mathcal{L}_{2}$ is free.
\end{conjecture}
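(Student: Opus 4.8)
This final statement is Terao's conjecture, one of the central open problems in the theory of arrangements; no proof — and no counterexample — is currently known, so what follows is not a proof but the strategy any proof would have to carry out, together with the point at which every known attempt stalls.

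The natural first step is to record that the Levi graph $G$ determines the whole intersection lattice of $\mathcal{L}$: the vertices $y_j$ are exactly the rank-two flats, and the edges say which lines lie on them. Hence every lattice invariant of $\mathcal{L}$ — in particular the characteristic polynomial $\chi(\mathcal{L},t)$ and the number $b_2 = b_2(\mathcal{L})$ — is encoded by $G$. By Terao's Factorization Theorem, freeness of the reduced curve $\mathcal{L}$ with exponents $(d_1,d_2)$ forces $\chi(\mathcal{L},t)=(t-1)(t-d_1)(t-d_2)$ (after passing to the associated central arrangement in $\mathbb{C}^3$), so that $d_1+d_2=|\mathcal{L}|-1$ and $d_1 d_2 = b_2$. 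Consequently, if $G_1\cong G_2$ and $\mathcal{L}_1$ is free with exponents $(d_1,d_2)$, then $\mathcal{L}_2$ can only be free with exactly the same pair $(d_1,d_2)$, and that pair is already pinned down by the combinatorics. So the conjecture reduces to a single implication: does the combinatorially forced factorization of $\chi$ force the splitting $\mathcal{D}_0(\mathcal{L}_2)\cong\mathcal{O}_{\mathbb{P}^2_{\mathbb{C}}}(-d_1)\oplus\mathcal{O}_{\mathbb{P}^2_{\mathbb{C}}}(-d_2)$? Since the converse to the factorization theorem is false in general, one needs a sharper tool.

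The sharpest tool available in this rank is Yoshinaga's criterion: the central arrangement attached to $\mathcal{L}$ is free if and only if the multirestriction of $\mathcal{L}$ to a generic line $H$ — a multiarrangement of points on $H$, hence automatically free — has exponents whose product equals $b_2(\mathcal{L})$. Thus, for line arrangements, Terao's conjecture is equivalent to the assertion that the exponents of this generic multirestriction are a function of $G$ alone. The plan would be to compute those exponents structurally: apply the addition–deletion and division theorems for multiarrangements (Abe and others) to reduce an arbitrary realisation of $G$ to one lying in a class where freeness and exponents are already known to be lattice-determined — inductively free, divisionally free, or supersolvable arrangements — and then argue that the reduction steps, hence the final exponents, use only the data recorded in $G$. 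Evidence that such a program should succeed is that Terao's conjecture is known for all of those classes and has been verified computationally for line arrangements with few lines.

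The hard part — and the reason the conjecture is still open — is exactly that $\mathcal{D}_0$, equivalently the graded minimal free resolution of the Jacobian ideal $\mathcal{J}_f$, is a priori an invariant of the polynomial $f$ and not visibly a function of $G$: within a fixed Levi graph the graded Betti numbers of $\mathcal{J}_f$ genuinely jump, and the exponents of a multirestriction carrying multiplicities $\geq 2$ are known to be non-combinatorial in general (the classical Ziegler pair). The crux is to control this jumping precisely on the \emph{free locus} — to show that the one configuration where $\mathcal{D}_0$ is forced to split cannot be destroyed by a lattice-preserving deformation of $f$ — and it is here that a uniform argument is missing; at present the conjecture can only be settled family by family.
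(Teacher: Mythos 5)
You have correctly recognized that this statement is Terao's conjecture, an open problem: the paper offers no proof and explicitly notes that the conjecture is ``widely open,'' being verified only for arrangements of $d\leq 13$ lines by Dimca \emph{et al.} Your survey of the standard strategy (lattice-determination of $\chi$, Yoshinaga's multirestriction criterion, the Ziegler-pair obstruction) is accurate and consistent with the literature the paper cites, and you rightly refrain from claiming a proof where none exists.
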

Let us point out that Terao's conjecture is widely open. It was shown to be true for $d\leq 13$ lines by Dimca \emph{et al}. in \cite{DIM}. In the context of potential counterexamples to Terao's conjecture, Dimca and Sticlaru defined a new class of arrangements which are called \emph{nearly free}. It is worth mentioning that the concept of nearly free curves appeared before \cite{DIM} in the framework of a naive generalization of Terao's conjecture to conic-line arrangements in the complex projective plane. Schenck and Toh\v aneanu showed in  \cite[Example 4.1]{Schenck1} that such a generalization is false: they constructed two relatively simple arrangements of $2$ lines and $2$ conics having the same combinatorics, where one arrangement is free and another one is nearly free.

For a reduced plane curve $C$ of degree $d$ given by $f \in S$ we denote by $J_{f} = \langle \partial_{x} f, \, \partial_{y} \, f,\partial_{z} \, f \rangle$ the Jacobian ideal and by $\mathfrak{m} = \langle x,y,z \rangle$ the irrelevant ideal. Consider the graded $S$-module $N(f) = I_{f} / J_{f}$, where $I_{f}$ is the saturation of $J_{f}$ with respect to $\mathfrak{m}$.
\begin{definition}
We say that a reduced plane curve $C$ is \emph{nearly free} if $N(f) \neq 0$ and for every $k$ one has ${\rm dim} \, N(f)_{k} \leq 1$. 
\end{definition}
A usual way to define nearly free curves is via the notion of \emph{defect}.
\begin{definition}\label{nu}
Let $C$ be a reduced plane curve. Then we define the defect of $C$ as
$$\nu(C) = {\rm max}_{k} \{ {\rm dim}_{\mathbb{C}} \, N(f)_{k}\}.$$
\end{definition}
We will denote
$${\rm dim}_{\mathbb{C}} \, N(f)_{k}=n(f)_k.$$

Clearly $C$ is a free curve if $\nu(C) = 0$ and $D$ is nearly free if $\nu(D) = 1$.\\
Given a reduced plane curve $C$ defined by $f=0$ in $\mathbb{P}^{2}_{\mathbb{C}}$, then using a result due to Dimca \cite{Dimca1} we can compute explicitly the defect. Let us denote by $\textrm{mdr}(f) = m$ the minimal degree of a non-trivial Jacobian relation, i.e.,
$$a \cdot \partial_{x}\,f + b \cdot \partial_{y} \, f + c \cdot \partial_{z} \, f = 0$$
where coefficients are homogeneous polynomials $a,b,c$ of degree $m$. 

Recall from \cite[p. 526]{G-P} the following crucial definitions.

\begin{definition}

Let $p$ be an  isolated singularity of a polynomial $f\in \mathbb{C}[x,y]. $

We can take local coordinates such that $p=(0,0)$.

The number 
$$\mu_{p}=\dim_\mathbb{C}\left(\mathbb{C}[x,y] /\bigg\langle \frac{\partial f}{\partial x},\frac{\partial f}{\partial y} \bigg\rangle\right)$$
is called the Milnor number of $f$ at $p$.

The number
$$\tau_{p}=\dim_\mathbb{C}\left(\mathbb{C}[ x,y] /\bigg\langle f,\frac{\partial f}{\partial x},\frac{\partial f}{\partial y}\bigg\rangle \right)$$
is called the Tjurina number of $f$ at $p$.
\end{definition}
For a projective situation,
with a point $p\in \mathbb{P}^2$ and a homogeneous polynomial  $f\in \mathbb{C}[x,y,z]$, we take local affine coordinates such that $p=(0,0,1)$ and then the dehomogenization of $f$.

We denote by $\tau(C)$ the total Tjurina number of $C$, i.e., 
$$\tau(C) = \sum_{p \in {\rm Sing}(C)} \tau_{p}.$$

\begin{theorem}[Dimca]\label{thm:defect}
Let $C$ be a reduced plane curve given by $f=0$ of degree $d$ and let $r = {\rm mdr}(f)$. Then we have the following:
\begin{enumerate}
    \item[a)] If $r < \frac{d}{2}$, then $\nu(C) = (d-1)^{2} - r(d-r-1) - \tau(C)$.
    \item[b)] If $r \geq \frac{d-2}{2}$, then
    $$ \nu(C) = \bigg\lceil \frac{3}{4}(d-1)^{2} \bigg\rceil - \tau(C).$$
\end{enumerate}
\end{theorem}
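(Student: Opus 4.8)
The plan is to translate the graded module $N(f)$ into the cohomology of the rank-two bundle $E:=\mathcal{D}_{0}$, and then to exploit its Chern classes together with a minimal-degree Jacobian syzygy. Write $T:=3(d-2)$. Twisting the defining sequence $0\to \mathcal D_{0}\to\mathcal O_{\mathbb P^2}^{3}\to\mathcal J_{f}(d-1)\to 0$ by $\mathcal O_{\mathbb P^2}(k-d+1)$ and taking cohomology (using $H^{1}(\mathbb P^2,\mathcal O(m))=0$) identifies $H^{1}(\mathbb P^2,E(k-d+1))$ with the cokernel of $S_{k-d+1}^{3}\to (I_{f})_{k}$, which is exactly $(I_{f})_{k}/(J_{f})_{k}$; hence
$$n(f)_{k}=h^{1}\bigl(\mathbb P^2,E(k-d+1)\bigr)\qquad\text{for all }k.$$
From the same sequence, and the fact that the saturated ideal $I_{f}$ cuts out the Tjurina scheme of length $\tau(C)$, a Chern-class computation gives $c_{1}(E)=1-d$ and $c_{2}(E)=(d-1)^{2}-\tau(C)$. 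It is also convenient to record the self-duality $n(f)_{k}=n(f)_{T-k}$ (local duality applied to the Milnor algebra $M(f)=S/J_{f}$), so that $\nu(C)$ is attained symmetrically about $T/2$.

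\emph{Case $r<d/2$ (part (a)).} Since $\mathrm{mdr}(f)=r$, we have $H^{0}(E(r))\neq 0$ while $H^{0}(E(r-1))=0$. A nonzero $\rho\in H^{0}(E(r))$ cannot vanish along a curve (dividing it out would produce a section of $E(r-e)$ for some $e\ge 1$, contradicting minimality of $r$), so $\rho$ vanishes on a $0$-dimensional scheme $Z$ and yields
$$0\longrightarrow \mathcal O_{\mathbb P^2}(-r)\longrightarrow E\longrightarrow \mathcal I_{Z}(r+1-d)\longrightarrow 0 .$$
Comparing second Chern classes gives $\deg Z=c_{2}\bigl(E(r)\bigr)=(d-1)^{2}-\tau(C)-r(d-1-r)$. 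Twisting this sequence by $\mathcal O(k-d+1)$ and using $H^{1}(\mathcal O(m))=0$, together with $H^{1}(\mathcal I_{Z}(m))\le \deg Z$ for every $m$ and $H^{1}(\mathcal I_{Z}(m))=\deg Z$ for $m\le -1$, shows that $n(f)_{k}=h^{1}(E(k-d+1))\le\deg Z$ for all $k$, with equality at $k=2d-r-3$ provided $d\ge 2r$ (so that the connecting map into $H^{2}(\mathcal O(d-2r-2))$ vanishes and the twist of $\mathcal I_{Z}$ has degree $-1$). The hypothesis $r<d/2$ is exactly what makes this value of $k$ admissible, so $\nu(C)=\deg Z$, which is part (a).

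\emph{Case $r\ge\frac{d-2}{2}$ (part (b)).} Here the slope of $E$ is $-(d-1)/2$, and the condition $r\ge (d-2)/2$ forces $E$ to be semistable when $d$ is odd (or $d$ even with $r\ge d/2$), and to sit exactly one step below the semistability border in the delicate case $d$ even, $r=(d-2)/2$. In all cases $H^{0}(E(\ell))=0$ for $\ell<r$, and, since $E^{\vee}\cong E(d-1)$, Serre duality gives $H^{2}(E(\ell))=H^{0}(E(d-4-\ell))=0$ for $d-4-\ell<r$. One checks that the central degree $\ell=\bigl\lceil (d-4)/2\bigr\rceil$ (equivalently $k=\lceil T/2\rceil$) lies in the range where both outer cohomology groups vanish, so there $n(f)_{k}=-\chi\bigl(E(\ell)\bigr)$; a Riemann--Roch computation shows $-\chi(E(\ell))$ is a downward parabola in $\ell$ with apex at this central degree and value $\lceil\tfrac34(d-1)^{2}\rceil-\tau(C)$. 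Finally one verifies this is the global maximum of $n(f)_{k}$: away from the centre the parabola $-\chi$ decreases, while $h^{0}(E(\ell))$ and $h^{2}(E(\ell))$ grow too slowly to compensate; in the borderline case one uses the bound $r_{1}+r_{2}\ge d-1$ on the degrees of a minimal generating pair of the syzygy module, which forces $\dim H^{0}(E(r))\le 1$ when $2r<d-1$, keeping $n(f)_{k}$ at the degrees neighbouring the centre below the central value. This establishes part (b).

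The Chern-class and Riemann--Roch bookkeeping, the vanishing statements for $H^{1}$ of twisted ideal sheaves, and the translation in the first paragraph are all routine. The genuinely delicate point is the maximality claim in part (b) — in particular the borderline case $d$ even, $r=(d-2)/2$, where $E$ just fails to be semistable and the support of $N(f)$ sits adjacent to the central degree; controlling $n(f)_{k}$ at those neighbouring degrees (via the $r_{1}+r_{2}\ge d-1$ estimate) is the step I expect to be the main obstacle.
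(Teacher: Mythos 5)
First, a point of reference: the paper does not prove Theorem \ref{thm:defect} at all --- it is quoted as a result of Dimca and imported from \cite{Dimca1} as a black box, so there is no in-paper argument to compare yours against. Judged on its own, your proposal follows what is essentially the route taken in the literature: the identification $n(f)_k=h^1(\mathbb{P}^2,\mathcal{D}_0(k-d+1))$ obtained from the defining sequence of $\mathcal{D}_0$, the Chern data $c_1(\mathcal{D}_0)=1-d$, $c_2(\mathcal{D}_0)=(d-1)^2-\tau(C)$, and a section of minimal twist $r=\mathrm{mdr}(f)$ with zero-dimensional vanishing locus. Your part (a) is complete and correct: $\deg Z=(d-1)^2-\tau(C)-r(d-1-r)$, the inequality $h^1(E(\ell))\le h^1(\mathcal{I}_Z(m))\le\deg Z$ holds for every twist, and at $k=2d-r-3$ one has $m=-1$ while $H^2(\mathcal{O}(d-2r-2))=0$ precisely because $2r<d$, so the bound is attained. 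I also checked the Riemann--Roch evaluation in part (b): at $\ell_0=\lceil (d-4)/2\rceil$ one indeed gets $-\chi(E(\ell_0))=\lceil\tfrac34(d-1)^2\rceil-\tau(C)$, and the vanishing of $h^0$ and $h^2$ there is exactly equivalent to $r\ge (d-2)/2$.

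The genuine gap is the one you flag yourself: the maximality claim in part (b). ``The parabola decreases while $h^0$ and $h^2$ grow too slowly'' is not yet an argument. After using the self-duality $n(f)_k=n(f)_{T-k}$ to reduce to $\ell\le\ell_0$ (where $h^0(E(\ell))=0$), what must be proved is $h^2(E(\ell_0-j))\le\chi(E(\ell_0-j))-\chi(E(\ell_0))$ for all $j\ge 1$, which by Serre duality ($E^{\vee}\cong E(d-1)$) becomes the explicit estimate $h^0(E(\ell_0+j))\le j^2$ for $d$ even, respectively $h^0(E(\ell_0-1+j))\le j(j-1)$ for $d$ odd. The first nontrivial instance is your borderline bound $h^0(E(r))\le 1$ when $2r<d-1$ (and $h^0(E(r))\le 2$ when $2r=d-1$, with equality only in the free case), and the higher $j$ require running the section sequence $0\to\mathcal{O}(\ell-r)\to E(\ell)\to\mathcal{I}_W(\ell+r+1-d)\to 0$ again and controlling $h^0(\mathcal{I}_W(\cdot))$; the naive bound by $h^0(\mathcal{O}(\cdot))$ does not obviously stay below $j^2$ when $r$ is large, so an extra input is needed --- either the restriction of $E$ to a generic line, or the known unimodality $n(f)_k\le n(f)_{k+1}$ for $k<T/2$, which together with self-duality is exactly what the published proof uses. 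Supplying that lemma (or the case analysis above) is what separates your sketch from a proof of part (b); everything else is sound.
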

A lot of work has been done to understand geometrical and combinatorial properties of nearly free curves, both from a viewpoint of vector bundles \cite{MarVal} and homological properties of those curves \cite{DimcaSticlaru}.

Recall that  for a curve $C$ given by $f \in S$ we define the Milnor algebra as $M(f) = S / J_{f}$.
The description of $M(f)$ for nearly free curves comes from \cite{DimcaSticlaru} as follows.
\begin{theorem}[Dimca-Sticlaru]
If $C$ is a nearly free curve of degree $d$ given by $f \in S$, then the minimal resolution of the Milnor algebra $M(f)$ has the following form:
$$ 0 \rightarrow S(-b-2(d-1))\rightarrow S(-d_{1}-(d-1))\oplus S(-d_{2}-(d-1)) \oplus S(-d_{3}-(d-1)) \rightarrow S^{3}(-d+1)\rightarrow S$$ for some integers $d_{1},d_{2},d_{3}, b$ such that $d_{1} + d_{2} = d$, $d_{2} = d_{3}$, and $b=d_{2}-d+2$. In that case, the pair $(d_{1},d_{2})$ is called the set of exponents of the nearly free curve $C$.
\end{theorem}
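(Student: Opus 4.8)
The plan is to reduce the statement to a description of the minimal graded free resolution of the module of Jacobian syzygies
$$AR(f)=\{(a,b,c)\in S^{3}\ :\ a\,\partial_{x}f+b\,\partial_{y}f+c\,\partial_{z}f=0\},$$
and then to pin down that resolution using the rank two bundle $\mathcal{D}_{0}=\widetilde{AR(f)}$ from the introduction.

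\textbf{Step 1 (shape of the resolution of $M(f)$).} Since $C$ is nearly free, $\nu(C)=1$, so $N(f)\neq 0$; hence $J_{f}$ is not saturated, $\mathfrak{m}\in\operatorname{Ass}(S/J_{f})$, and $\operatorname{depth}_{S}M(f)=0$. By the Auslander--Buchsbaum formula $\operatorname{pd}_{S}M(f)=3$, so the minimal resolution has length three, $0\to F_{3}\to F_{2}\to F_{1}\to S\to M(f)\to 0$. A nearly free curve is not a pencil of concurrent lines (such curves are free), so $\operatorname{mdr}(f)\geq 1$, and therefore $\partial_{x}f,\partial_{y}f,\partial_{z}f$ form a minimal system of generators of $J_{f}$; thus $F_{1}=S^{3}(-(d-1))$. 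The kernel of $F_{1}\to S$ is exactly $AR(f)(-(d-1))$, so the tail $0\to F_{3}\to F_{2}\to AR(f)(-(d-1))\to 0$ is the minimal resolution of $AR(f)(-(d-1))$, and in particular $\operatorname{pd}_{S}AR(f)=1$. Hence it suffices to show that the minimal resolution of $AR(f)$ has the form
$$0\longrightarrow S(-d_{2}-1)\longrightarrow S(-d_{1})\oplus S(-d_{2})^{\oplus 2}\longrightarrow AR(f)\longrightarrow 0$$
with $d_{1}=\operatorname{mdr}(f)$ and $d_{1}+d_{2}=d$: twisting by $-(d-1)$ and splicing on $S^{3}(-(d-1))\to S$ then yields the resolution in the statement, with $d_{3}=d_{2}$ and $b=d_{2}+1+(d-1)-2(d-1)=d_{2}-d+2$.

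\textbf{Step 2 (the bundle $\mathcal{D}_{0}$ and the key identification).} Sheafifying $0\to AR(f)\to S^{3}\to J_{f}(d-1)\to 0$ gives $\mathcal{D}_{0}=\widetilde{AR(f)}$, a rank two vector bundle on $\mathbb{P}^{2}_{\mathbb{C}}$. From this sequence together with $0\to\mathcal{J}_{f}(d-1)\to\mathcal{O}(d-1)\to\mathcal{O}_{Z}(d-1)\to 0$, where $Z$ is the singular subscheme of $C$ and $\operatorname{length}(Z)=\tau(C)$ because locally the Jacobian ideal equals the Tjurina ideal, a Chern class computation gives
$$c_{1}(\mathcal{D}_{0})=-(d-1),\qquad c_{2}(\mathcal{D}_{0})=(d-1)^{2}-\tau(C).$$
Taking cohomology of the first sequence twisted by $k$ and using $H^{0}(\mathcal{J}_{f}(m))=(I_{f})_{m}$ yields the identification $H^{1}(\mathbb{P}^{2}_{\mathbb{C}},\mathcal{D}_{0}(k))\cong N(f)_{k+d-1}$ for all $k$. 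Thus near-freeness is precisely the statement that $h^{1}(\mathcal{D}_{0}(k))\leq 1$ for every $k$, with strict positivity for some $k$; Serre duality together with $\mathcal{D}_{0}^{\vee}\cong\mathcal{D}_{0}(d-1)$ moreover yields the self-duality $N(f)_{j}\cong N(f)_{3(d-2)-j}^{\vee}$.

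\textbf{Step 3 (determining the resolution of $AR(f)$).} Write the minimal resolution $0\to\bigoplus_{j=1}^{s}S(-e_{j})\to\bigoplus_{i=1}^{s+2}S(-a_{i})\to AR(f)\to 0$; the ranks differ by $\operatorname{rank}AR(f)=2$. Here $s\geq 1$, since $s=0$ would make $AR(f)$, hence $C$, free. Dualizing this resolution and applying graded local duality to $M(f)$ identifies $\operatorname{Ext}^{1}_{S}(AR(f),S)$ with a shift of $N(f)^{\vee}$, a module minimally generated by $s$ elements; hence $N(f)$ has $s$-dimensional socle, and by the self-duality of Step 2 also $s$ minimal generators (so the two relation degrees would be forced to be distinct if $s\geq 2$). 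On the other hand, the Hilbert function of $N(f)$ is determined by $d$ and the data $(a_{i}),(e_{j})$ via Step 2, while $\tau(C)$, hence $c_{2}(\mathcal{D}_{0})$, is given explicitly by Theorem~\ref{thm:defect} with $\nu(C)=1$. Confronting these data with the constraint $h^{1}(\mathcal{D}_{0}(k))\leq 1$ forces $s=1$ — so $AR(f)$ has three generators and one relation — and then $a_{1}=\operatorname{mdr}(f)=:d_{1}$, $a_{2}=a_{3}=:d_{2}$, $d_{1}+d_{2}=d$, and $e_{1}=d_{2}+1$. This is exactly the resolution required in Step 1, and the assembly performed there completes the proof.

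The step I expect to be the main obstacle is the last one: showing that the bound $\dim N(f)_{k}\leq 1$ forces the syzygy bundle $\mathcal{D}_{0}$ to be as close to a direct sum of line bundles as it can be, i.e. that $AR(f)$ has exactly one (necessarily low-degree, linear-type) relation rather than several. Once this is in hand, the remaining degree constraints follow from $c_{1}(\mathcal{D}_{0})$, $c_{2}(\mathcal{D}_{0})$, the value of $\tau(C)$ from Theorem~\ref{thm:defect}, and minimality, and the passage back to $M(f)$ is routine bookkeeping with twists.
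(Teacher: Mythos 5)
The paper does not actually prove this statement: it is quoted with attribution to Dimca--Sticlaru \cite{DimcaSticlaru}, so there is no in-paper argument to compare yours against, and I can only assess the proposal on its own terms. Your Steps 1 and 2 are correct and standard: $N(f)\neq 0$ gives depth zero and hence projective dimension $3$ by Auslander--Buchsbaum; $\operatorname{mdr}(f)\geq 1$ makes the partials a minimal generating set of $J_f$; the tail of the resolution is the minimal resolution of the twisted syzygy module; and the identifications $H^1(\mathcal{D}_0(k))\cong N(f)_{k+d-1}$, $c_1(\mathcal{D}_0)=-(d-1)$, $c_2(\mathcal{D}_0)=(d-1)^2-\tau(C)$, and the self-duality $N(f)_j\cong N(f)^{\vee}_{3(d-2)-j}$ are all as you say. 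The bookkeeping that converts the claimed resolution of $AR(f)$ into the stated resolution of $M(f)$, with $b=d_2-d+2$, also checks out.

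The gap is exactly where you predicted it, in Step 3, and it is a genuine missing idea rather than a routine verification. The data you assemble --- that $N(f)$ has $s$ minimal generators and $s$-dimensional socle, that every graded piece has dimension at most $1$, and that $c_1$, $c_2$, $\tau(C)$ are known --- do not force $s=1$. A finite-length graded module such as $\mathbb{C}\oplus\mathbb{C}(-5)$ has two generators, two-dimensional socle, and all graded pieces of dimension at most one, so nothing in your list excludes $N(f)$ splitting into pieces supported in disjoint degree intervals. The missing input is the nontrivial structural fact that the support of $N(f)$ has no gaps --- more precisely, that multiplication by a generic linear form is injective on $N(f)_k$ for $k$ below the middle degree $3(d-2)/2$ and surjective above it, so the Hilbert function is symmetric and unimodal. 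Only with that in hand does $\dim N(f)_k\leq 1$ plus $N(f)\neq 0$ force the Hilbert function to be identically $1$ on a single symmetric interval, and one must then still argue cyclicity rather than read it off the Hilbert function. That Lefschetz-type property is precisely the content of the Dimca--Sticlaru analysis being cited, and leaning on Theorem \ref{thm:defect} at this point is delicate, since that formula for $\nu(C)$ is itself extracted from the same structure theory. So the architecture of your reduction is right, but the step that carries the whole weight of the theorem is asserted, not proved.
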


The main purpose of the paper is to construct new examples of plane curves which are  $3$-syzygy curves, see Definition \ref{df:k-syz} -- these are, for instance, nearly free curves. We will use geometry standing behind them in order to show the existence of unexpected curves.
Our results concerning unexpected curves provide a framework for constructing examples of such curves, and give additional insight into the results of Cook II-Harbourne-Migliore-Nagel \cite{CHMN} and of Trok \cite{Trok}.

The paper is organized as follows.
In Section \ref{sec:DelFerArr} using the so-called Fermat arrangements of lines in the complex projective plane we construct an infinite family of nearly free curves (a deletion type family) with the property that this family admits a series of unexpected curves (what is shown in Section \ref{sec:unexp}).  In Section \ref{sec:nearfreecon} we  construct a family of conic arrangements which are nearly free (an addition type arrangement) and in Section \ref{sec:almfree} we present an example of $4$ conics with $12$ tacnodes which is almost free. 
In Section \ref{sec:unexp} we study some $3$-syzygy arrangements and show that they admit unexpected curves.
Then, in Section \ref{sec:maclane}, we focus on conic arrangement constructed via flat extensions. Our procedure leads to an interesting arrangement of smooth conics, but it turns out that flat extensions do not preserve the nearly freeness. However, we believe that the construction is interesting on its own right, and this approach fits into the current interest of researchers in constructing new curve arrangements. The Leitmotif for our work is to explore addition-deletion type constructions of curve arrangements in the plane since this topic is still open in its whole generality. In this context it is worth mentioning that an addition-deletion type statement was proved by Schenck, Terao, and Yoshinaga \cite{Schenck2} for arrangements of plane curves with quasihomogeneous singularities.

\section{Deletion for Fermat arrangements}\label{sec:DelFerArr}
In this section we are going to investigate the addition-deletion procedure starting with free arrangements of lines. Roughly speaking, we want to construct new examples (or even a new family) of nearly free arrangements constructed by deleting some lines from free arrangements of lines. This path of investigations was indicated, for instance, in \cite{MarVal}. Our main result of this section provides a whole family of nearly free arrangements which are non-examples with respect to \cite[Proposition 3.1.]{MarVal}.

We start with the well-known family of Fermat line arrangements $\mathcal{F}_{n}$ in $\mathbb{P}^{2}_{\mathbb{C}}$ given by the following defining polynomial
$$Q(x,y,z) = (x^{n} - y^{n})(y^{n}-z^{n})(z^{n}-x^{n})$$
with $n\geq 3$. This arrangement has exactly $n^{2}$ triple points and $3$ points of multiplicity $n$, and it is known to be free with the exponents $(n+1,2n-2)$.
We consider the arrangement $\mathcal{NF}_{n}$ defined by the following equation
$$\widetilde{Q}(x,y,z) = (x^{n} - y^{n})(y^{n}-z^{n})(z^{n}-x^{n})/(x-y) = (x^{n-1} + yx^{n-2} + ... +y^{n-2}x + y^{n-1})(y^{n}-z^{n})(z^{n}-x^{n})$$
with $n\geq 3$.

Let us present the most important combinatorial properties of $\mathcal{NF}_{n}$.
\begin{enumerate}
    \item For $n=3$ we obtain an arrangement which is isomorphic to the famous MacLane arrangement of $8$ lines -- according to our best knowledge this fact is not written explicitly in the literature. The arrangement of $8$ lines with the maximal number of triple points is considered, for instance, in \cite{triple}.
    \item For given $n\geq 3$ the arrangement $\mathcal{NF}_{n}$ has exactly two points of multiplicity $n$, one point of multiplicity $n-1$, $n^{2}-n$ triple points, and $n$ double points.
\end{enumerate}
Now we show the following main result of this section.
\begin{proposition}
For $n\geq 3$ the arrangement $\mathcal{NF}_{n}$ is nearly free with the exponents $d_{1} = n+1$ and $d_{2}=2n-2$.
\end{proposition}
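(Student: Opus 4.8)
The plan is to apply Dimca's Theorem~\ref{thm:defect} directly: I need to compute the degree $d$, the total Tjurina number $\tau(\mathcal{NF}_n)$, and the minimal degree of a relation $r = \mathrm{mdr}(\widetilde Q)$, and then check that $\nu(\mathcal{NF}_n) = 1$ with the claimed exponents. The degree is immediate: $\widetilde Q$ has degree $3n-1$. The Tjurina number is also routine since all singularities of a line arrangement are ordinary (quasihomogeneous), so $\tau_p = (m_p-1)^2$ for a point of multiplicity $m_p$; using the stated combinatorics of $\mathcal{NF}_n$ — two points of multiplicity $n$, one of multiplicity $n-1$, $n^2-n$ triple points, and $n$ nodes — one gets $\tau(\mathcal{NF}_n) = 2(n-1)^2 + (n-2)^2 + 4(n^2-n) + n$. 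A short expansion should simplify this to a clean polynomial in $n$.

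The key step, and the main obstacle, is to establish that $r = \mathrm{mdr}(\widetilde Q) = n+1$. For this I would first exhibit a syzygy of degree $n+1$ among $\partial_x\widetilde Q, \partial_y\widetilde Q, \partial_z\widetilde Q$: the natural source is the known freeness of $\mathcal{F}_n$ with exponents $(n+1, 2n-2)$, which supplies a syzygy of degree $n+1$ on the Jacobian ideal of $Q$; since $\widetilde Q = Q/(x-y)$, I expect to transport this syzygy to $\widetilde Q$ by an explicit manipulation (writing $\partial_i Q = (x-y)\partial_i\widetilde Q + (\partial_i(x-y))\widetilde Q$ and solving). I would then argue there is no syzygy of degree $\le n$: the cheapest way is to use Theorem~\ref{thm:defect}(a) as a consistency/optimality check — if $r \le n < (3n-1)/2$ for $n \ge 2$, then $\nu = (3n-2)^2 - r(3n-2-r) - \tau$ must be a nonnegative integer, and plugging in the computed $\tau$ should force $r = n+1$ (any smaller $r$ would make $\nu$ negative, a contradiction), while $r=n+1$ yields exactly $\nu = 1$.

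Once $r = n+1$ is pinned down, since $n+1 < \frac{3n-1}{2}$ for all $n\ge 3$ we are in case (a) of Theorem~\ref{thm:defect}, and the computation $\nu(\mathcal{NF}_n) = (3n-2)^2 - (n+1)(2n-3) - \tau(\mathcal{NF}_n)$ should evaluate to $1$, proving near-freeness. The exponents then follow from the Dimca–Sticlaru structure theorem: for a nearly free curve $d_1 + d_2 = d = 3n-1$ and $d_1 = \mathrm{mdr}(\widetilde Q) = n+1$, hence $d_2 = 2n-2$, as claimed. I expect the only genuinely delicate point to be making the syzygy-transport argument rigorous and confirming that no lower-degree relation sneaks in; the Tjurina bookkeeping and the final arithmetic are mechanical. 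As a sanity check for $n=3$ one can cross-reference the MacLane arrangement, whose near-freeness with exponents $(4,4)$ is consistent with the formulas.
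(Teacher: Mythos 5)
Your bookkeeping is correct as far as it goes: $\deg\widetilde Q=3n-1$, and your Tjurina sum $2(n-1)^2+(n-2)^2+4(n^2-n)+n$ does simplify to $7n^2-11n+6$, matching the paper. The final arithmetic $(3n-2)^2-(n+1)(2n-3)-(7n^2-11n+6)=1$ is also right. But the step you yourself flag as the main obstacle --- proving $\mathrm{mdr}(\widetilde Q)=n+1$ --- is where the proposal breaks down, and the mechanism you propose for the lower bound does not work. You argue that if $r\le n$ then the case (a) formula would force $\nu<0$. It doesn't: for $r<\tfrac{d-1}{2}$ the quantity $r(d-1-r)$ is \emph{increasing} in $r$, so decreasing $r$ makes $\nu=(d-1)^2-r(d-1-r)-\tau$ \emph{larger}, not negative. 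Concretely, plugging $r=n$ gives
$$\nu=(3n-2)^2-n(2n-2)-(7n^2-11n+6)=n-2\ge 1\quad (n\ge 3),$$
a perfectly admissible nonnegative value, so no contradiction arises and $r=n$ (or smaller) is not excluded by this argument. The existence half (transporting a degree-$(n+1)$ derivation from the free arrangement $\mathcal F_n$ via $D(\mathcal F_n)\subseteq D(\mathcal{NF}_n)$) gives only $\mathrm{mdr}(\widetilde Q)\le n+1$; the lower bound needs a genuine input. The paper gets it from the addition--deletion machinery for deletions of free arrangements (Abe--Dimca, Theorem 5.11 of \cite{abe-dimca}, together with Abe's plus-one-generated theorem \cite{abe18}), which produces the entire minimal resolution of $M(\widetilde Q)$ and hence $\mathrm{mdr}(\widetilde Q)=n+1$ in one stroke; alternatively one could invoke \cite[Proposition 2.12]{ADS19}, as the paper does later for a related family. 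Without some such result your proof is incomplete.

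A second, smaller error: your claim that $n+1<\tfrac{3n-1}{2}$ for all $n\ge 3$ is false at $n=3$, where $n+1=4=\tfrac{3n-1}{2}$. So for $n=3$ you are not in case (a) of Theorem \ref{thm:defect} and must use case (b), exactly as the paper does, computing $\nu(\mathcal{NF}_3)=\lceil\tfrac34\cdot 49\rceil-36=1$. (The two formulas happen to agree at this boundary value, but the hypothesis of case (a) is simply not satisfied, so the case split is not optional.) Finally, your derivation of the exponents from $d_1=\mathrm{mdr}$ and $d_1+d_2=d$ is fine once near-freeness and $\mathrm{mdr}=n+1$ are in hand.
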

\begin{proof}
First of all, using \cite[Theorem 5.11]{abe-dimca} and \cite[Theorem 1.4]{abe18}, we write down the minimal free resolution of the Milnor algebra $M(\widetilde{Q})$, which has the following form
$$0 \rightarrow S(-5n+3) \rightarrow S^{2}(-5n+4) \oplus S(-4n+1) \rightarrow S^{3}(-3n+2) \rightarrow S,$$
and we can conclude that $\textrm{mdr}(\widetilde{Q}) = n+1$. 
We can compute the total Tjurina number by the following formula
$$\tau(\mathcal{NF}_{n}) = \sum_{p \in {\rm Sing}(\mathcal{NF}_{n})} \mu_{p} = \sum_{p \in {\rm Sing}(\mathcal{NF}_{n})} ({\rm mult}_{p} - 1)^{2} = 7n^{2} -11 n + 6,$$
where the first equality follows from the fact that all singular points of line arrangements are quasihomogenous and then by  \cite[Satz]{Saito} we have $\mu_p = \tau_p$, the second equality is a well-known property of Milnor numbers when singularities are ordinary, and it comes directly from \cite[Theorem 10.5]{Milnor}.

In order to finish our proof, we need to consider two cases, namely when $n = 3$ and $n\geq 4$. We use Theorem \ref{thm:defect}. If $n\geq 4$, then $\textrm{mdr}(\widetilde{Q}) < \frac{3n}{2}$, and
$$\nu(\mathcal{NF}_{n}) = (3n-2)^{2} - (n+1)\cdot(2n-3) - (7n^{2}-11n+6) = 1.$$
If $n=3$, then $4 = \textrm{mdr}(\widetilde{Q})  > \frac{d-2}{2} = 3$, and thus we can apply the second alternative in Theorem \ref{thm:defect}, namely
$$\nu(\mathcal{NF}_{3}) = \bigg\lceil \frac{3}{4}(8-1)^{2} \bigg\rceil - 36 = 1,$$
which completes the proof.

\end{proof}


\section{Nearly free conic arrangements}\label{sec:nearfreecon}
In this section we would like to construct interesting arrangements of smooth conics which are nearly free. Our main tool is the classical Kummer cover of the projective plane, namely
$$\pi_{k}((x:y:z)) := (x^{k} : y^{k} : z^{k})$$
with $k \geq 2$. Kummer covers are finite Galois covers of $\mathbb{P}^{2}_{\mathbb{C}}$ ramified along $xyz=0$ with the Galois group $(\mathbb{Z}/k\mathbb{Z})^{2}$ and these are very useful when we want to construct an interesting curve starting from a simpler one. Let us consider the curve $C$ given by $h=x^{2}+y^{2}+z^{2} - 2(xy + yz + yz)$. We define a new curve by
$$C_{k} : \pi_{k}^{*}(h)=x^{2k}+y^{2k}+z^{2k} - 2(x^{k}y^{k} + y^{k}z^{k} + y^{k}z^{k})  = 0.$$
As it was shown by Artal Bartolo \emph{et al.} in \cite{Artal} that the curve $C_{k}$ is nearly free. A similar idea to use Kummer covers appeared in \cite{PR}, but in a different setting of curves with low Harbourne indices. Using this idea we are going to construct a family of conic arrangements $\mathcal{C}_{k}$ such that for every $k\geq 2$ the resulting curve is nearly free. In this particular case, the Kummer cover plays a role of the conductor for the addition procedure for curve arrangements.

Let us present our construction. Our starting point is the following smooth conic $C: xy + z^{2}=0$. Then we apply the Kummer cover of order $k\geq 2$ obtaining the following family 
$$\mathcal{C}_{k} \, : \, \{ f_{k} := x^{k}y^{k} + z^{2k} =0\}.$$ 
Observe that $\mathcal{C}_{k}$ splits as an arrangement of $k$ smooth conics. It is easy to observe that $\mathcal{C}_{k}$ has only two singular points which are $P_{1} = (1:0:0)$ and $P_{2} = (0:1:0)$. The main result of this section is the following.
\begin{proposition}
The arrangement $\mathcal{C}_{k}$ with $k\geq 2$ is nearly free with the exponents $(1,2k-1)$.
\end{proposition}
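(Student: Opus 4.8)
The plan is to mirror the structure of the proof of the previous proposition for $\mathcal{NF}_n$: compute $\mathrm{mdr}(f_k)$, compute the total Tjurina number $\tau(\mathcal{C}_k)$, and then feed these into Theorem~\ref{thm:defect} to read off $\nu(\mathcal{C}_k) = 1$. Since $\mathcal{C}_k$ has only the two singular points $P_1 = (1:0:0)$ and $P_2 = (0:1:0)$, which by symmetry in $x,y$ are of the same analytic type, the Tjurina computation reduces to understanding one local singularity. In affine coordinates near $P_1$, set $x = 1$, so the curve is $y^k + z^{2k} = 0$; this is a quasihomogeneous singularity, so $\mu_{P_1} = \tau_{P_1}$ by Saito's result, and the Milnor number of $y^k + z^{2k}$ is $(k-1)(2k-1)$. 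Hence $\tau(\mathcal{C}_k) = 2(k-1)(2k-1)$.

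Next I would pin down $r := \mathrm{mdr}(f_k)$. A natural first guess is that the Euler-type or a low-degree syzygy coming from the monomial structure of $f_k = x^k y^k + z^{2k}$ gives a relation of small degree. Indeed, differentiating, $\partial_x f_k = k x^{k-1} y^k$, $\partial_y f_k = k x^k y^{k-1}$, $\partial_z f_k = 2k z^{2k-1}$, and one checks that $(y, -x, 0)$ is a Jacobian syzygy: $y \cdot k x^{k-1} y^k - x \cdot k x^k y^{k-1} = 0$. So $r \le 1$, and since $f_k$ is not smooth (it is singular), $r \ge 1$; therefore $r = 1$. This also matches the claimed exponent $d_1 = 1$.

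With $d = 2k$ and $r = 1$, we have $r = 1 < \frac{d}{2} = k$ for all $k \ge 2$, so case (a) of Theorem~\ref{thm:defect} applies:
$$\nu(\mathcal{C}_k) = (2k-1)^2 - 1\cdot(2k-1-1) - 2(k-1)(2k-1) = (2k-1)^2 - (2k-2) - (4k^2 - 6k + 2).$$
Expanding, $(2k-1)^2 = 4k^2 - 4k + 1$, so the right-hand side is $4k^2 - 4k + 1 - 2k + 2 - 4k^2 + 6k - 2 = 1$. Thus $\nu(\mathcal{C}_k) = 1$ and $\mathcal{C}_k$ is nearly free. The exponents are then $(d_1, d_2) = (r, d - 1 - r) = (1, 2k - 2)$ by the standard description; but here we should double-check against the Dimca--Sticlaru resolution, which requires $d_1 + d_2 = d = 2k$, forcing $d_2 = 2k - 1$, consistent with the claimed $(1, 2k-1)$.

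The main obstacle is rigorously justifying $\mathrm{mdr}(f_k) = 1$ rather than merely $\le 1$: one must verify there is no degree-zero (constant-coefficient) Jacobian relation, i.e., that $\partial_x f_k, \partial_y f_k, \partial_z f_k$ are $\mathbb{C}$-linearly independent, which is immediate here since $x^{k-1}y^k$, $x^k y^{k-1}$, $z^{2k-1}$ are distinct monomials. A secondary subtlety is confirming that the singularity $y^k + z^{2k} = 0$ really contributes Milnor number $(k-1)(2k-1)$ and that $P_1, P_2$ are the only singular points of $\mathcal{C}_k$ as a reduced curve (including checking that the $k$ conics meet only at $P_1$ and $P_2$), which follows from inspecting $f_k = \prod_{\zeta^k = -1}(xy - \zeta z^2)$ — two distinct conics $xy = \zeta z^2$ and $xy = \zeta' z^2$ intersect exactly where $z = 0$ and $xy = 0$, i.e. at $P_1$ and $P_2$. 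Once these local points are settled, the computation is the routine plug-in above.
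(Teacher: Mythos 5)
Your proposal follows essentially the same route as the paper: exhibit a degree-one Jacobian syzygy to get $\mathrm{mdr}(f_k)=1$, identify the two singular points with local quasihomogeneous equation $y^k+z^{2k}$ so that $\tau_{P_i}=(k-1)(2k-1)$, and plug into case (a) of Theorem \ref{thm:defect} to get $\nu(\mathcal{C}_k)=1$; the paper then also writes out the minimal free resolution to read off the exponents, while you deduce $d_2=2k-1$ from $d_1+d_2=d$ for nearly free curves, which is equally valid. One slip to fix: the syzygy is $(x,-y,0)$, i.e. $x\cdot\partial_x f_k - y\cdot\partial_y f_k = kx^ky^k - kx^ky^k = 0$, not $(y,-x,0)$ — as written your identity reads $y\cdot kx^{k-1}y^k - x\cdot kx^ky^{k-1} = kx^{k-1}y^{k-1}(y^2-x^2)$, which is nonzero; the correct relation is the paper's $(\triangle)$ and the rest of your argument goes through unchanged.
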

\begin{proof}
We can find, just by hand, non-trivial relations among the partial derivatives of $f_{k}$, namely
\begin{equation} \label{eq1}
\begin{split}
(\triangle) & : x \cdot \partial_{x} \,f_{k} - y \cdot \partial_{y} \, f_{k} = 0, \\
(\diamondsuit)  & : 2 z^{2k-1} \cdot \partial_{x} \, f_{k} - x^{k-1}y^{k} \cdot \partial_{z} \, f_{k} = 0, \\
(\star)     & : 2 z^{2k-1} \cdot \partial_{y} \, f_{k} - x^{k}y^{k-1} \cdot \partial_{z} \, f_{k} = 0.
\end{split}
\end{equation}
Our aim is to apply Theorem \ref{thm:defect}. First of all, $\textrm{mdr}(f_{k}) = 1$. Moreover, observe that for each singular point $P_{i}$ we have locally, after dehomogenization and a possible change of coordinates, the polynomial $x^{k} + y^{2k}$, which allows us to compute the Tjurina number for each $P_{i}$, so we have $\tau_{P_{i}} = (2k-1)(k-1)$. We obtain
$$\nu(\mathcal{C}_{k}) = (2k-1)^{2} - (2k-2) - 2\cdot(2k-1)(k-1) = 1,$$
so arrangement $\mathcal{C}_{k}$ is nearly free. We can find by hand the minimal free resolution of the Milnor algebra $M(f_{k})$ which has the following form:
$$0 \rightarrow S(-4k+1) \rightarrow S(-4k+2)^{2} \oplus S(-2k) \rightarrow  S^{3}(-2k+1) \rightarrow S,$$
so the exponents of the nearly free arrangement $\mathcal{C}_{k}$ are $(1, 2k-1)$.
\end{proof}
\begin{remark}
The idea to use conics of a similar type appears in \cite{Dimca2} where the author constructs also examples of nearly free curves.
\end{remark}
At the end of this section, let us present some experiments that we performed using Singular, \cite{DGPS}. If we add the line $x=0$ or $y=0$ to the arrangement $\mathcal{C}_{k}$ we obtain a free arrangement with the exponents $(1,2k-1)$. Adding the line $z=0$ in return gives a nearly free arrangement with the same exponents. We see that the result is connected with the number of singularities of $\mathcal{C}_{k}$ that belong to the added line. These insights suggest that a similar statements, as in \cite{Schenck1} for conic-line free arrangements, can be investigated in the nearly free cases.

\section{Almost free curves}\label{sec:almfree}
In this section we would like to add to the list some new examples of \emph{almost free} curves. 

\begin{definition}[\cite{DimcaSticlaru}]
 We say that a reduced plane curve $C$ of degree $d$ given by $f=0$ is \emph{almost free} if ${\rm dim} \, N(f) = 1$ for $d$ even, and ${\rm dim} \, N(f) = 2$ for $d$ odd.
\end{definition}
\begin{example}
As a warm-up, let us consider the following line arrangement $\mathcal{A}_{9}$ in $\mathbb{P}^{2}_{\mathbb{C}}$ given by 
$$Q(x,y,z) = x(x^{2}+xy+y^{2})(y^{3} - z^{3})(z^{3} -x^{3}).$$
This arrangement can be viewed as an addition arrangement since we add one line $x=0$ to the MacLane arrangement. 
By an easy inspection (supported by Singular computations, \cite{DGPS}) we can see that the minimal resolution of the Milnor algebra $M(Q)$ has the following form
$$0 \rightarrow S(-14) \rightarrow S^{2}(-13) \oplus S(-12) \rightarrow S^{3}(-8) \rightarrow S.$$
Using the results from \cite{HS12} (see Subsection \ref{subsec:plus} for more detailed description of that results) we get the resolution of $N(Q)$:
$$0 \rightarrow S(-14) \rightarrow S^{2}(-13) \oplus S(-12) \rightarrow    S^{2}(-11)\oplus S(-12)\rightarrow S(-10).$$
Now by the formula for the Hilbert function (cf. \cite{eisenbud}, Corollary 1.2) we have 
${\rm dim}N(Q)_{10} = {\rm dim} N(Q)_{11} = 1$, and ${\rm dim} N(Q)_{k} = 0$ for $k\not\in \{10,11\}$, so the arrangement $\mathcal{A}_{9}$ is almost free. It is worth noticing that if we take a slightly more general line in the place of 
$x=0$, for instance $x+y+z=0$, then for this choice the resulting arrangement $\mathcal{A}_{9}'$ is free with the exponents $d_{1} = 4$ and $d_{2}=4$.
\end{example}
Now we would like to present an interesting example of an irreducible almost free curve of degree $10$. This curve does not fit into the world of arrangements, but we believe that it is worth presenting here due to its possible further utility.
\begin{example}
The following curve is strictly related with Hirzebruch's works on Hilbert modular surfaces. Let us consider the Klein curve $\mathcal{K}$ of degree $10$ given by the following equation
\begin{multline*}
K(x,y,z) = 320x^{2}y^{2}z^{6} - 160x^{3}y^{3}z^{4} + 20x^{4}y^{4}z^{2} + 6x^{5}y^{5} + x^{10} + y^{10} \\
 - 4(x^{5}+y^{5})\cdot(32z^{5}-20xyz^{3}+5x^{2}y^{2}z).    
\end{multline*}
The curve $\mathcal{K}$ has exactly six singular points which are double $(2,3)$-cusps -- these are locally given by the following equation $(x^{3} +y^{2})(y^{3}+x^{2}) = 0$. Moreover, $\mathcal{K}$ is invariant under the action of ${\rm PSL}(2,5) = A_{5}$, and the configuration of these six cusps forms  a  unique  minimal  orbit  of $A_{5}$ -- see \cite{Klein}. The minimal free resolution of $M(K)$ has the following form
$$0 \rightarrow S(-15) \rightarrow S(-14)^{3} \rightarrow S(-9)^{3} \rightarrow S,$$
${\rm dim}N(K)_{12} =1$, and ${\rm dim}N(K)_{k} = 0$ for $k\neq 12$, so our curve $\mathcal{K}$ is almost free.
\end{example}
Finally we would like to look at the question whether arrangements of curves with points of multiplicity $2$ can be either nearly or almost free. In the case of $d$ generic lines $\mathcal{L}_{d}$ with $\binom{d}{2}$ double intersection points, the only interesting case is when $d=3$ and then the arrangement $\mathcal{L}_{3}$ is free. Here we focus on arrangements of conics with only tacnodes as the intersection points -- these are non-ordinary points of multiplicity $2$.
\begin{example}
There is an interesting problem of the existence of conic arrangements possessing the maximal possible number of tacnodes. There is an obvious upper bound 
$$\#\,  {\rm tacnodes} \leq k(k-1),$$
where $k$ denotes the number of conics.
Let us consider the arrangement $\mathcal{C} = \{C_{1},C_{2},C_{3},C_{4}\} \subset\mathbb{P}^{2}_{\mathbb{C}}$ given by $C=f_{1}\cdot ... \cdot f_{4} = 0$ such that
$$C_{1} : f_{1} = xy-z^{2},$$
$$C_{2} : f_{2} = xy+z^{2},$$
$$C_{3} : f_{3} = x^{2} + y^{2} -2z^{2},$$
$$C_{4} : f_{4} = x^{2} + y^{2} +2z^{2}.$$
One can easily check that $\mathcal{C}$ has exactly $12$ tacnodes, so for $k=4$ we obtained the maximal possible number of tacnodes according to the above naive upper-bound.
We can compute the minimal free resolution of the Milnor algebra $M(C)$ which has the following form
$$0 \rightarrow S(-12) \rightarrow S^{3}(-11) \rightarrow S^{3}(-7) \rightarrow S,$$
${\rm dim}N(C)_{9} =1$, and ${\rm dim}N(C)_{\ell} = 0$ for $\ell \neq 9$, so our arrangement $\mathcal{C}$ is almost free. It is worth noticing that if we consider an arrangement of $5$ conics with $17$ tacnodes (which is the maximal possible number of tacnodes in that case by the Miyaoka-Yau inequality, see for instance \cite{megyesi}), then this arrangement is neither nearly free nor almost free.
\end{example}

\section{Unexpected curves}\label{sec:unexp}

Let $Z = P_{1}+ ... + P_{s}$ be a reduced scheme of mutually distinct points in $\mathbb{P}^{2}_{\mathbb{C}}$. We say that $Z$ admits an unexpected curve of degree $d$ if for a generic point $P \in \mathbb{P}^{2}_{\mathbb{C}}$ of multiplicity $m$ we have that
\begin{equation}\label{ineq-unexpected}
{\rm dim}_{\mathbb{C}} [I(Z + mP)]_{d} > {\rm max} \bigg\{ {\rm dim}_{\mathbb{C}} [I(Z)]_{d} - \binom{m+1}{2}, 0 \bigg\},
\end{equation}
where $I(Z + mP) = I(P_{1})\cap ... \cap I(P_{s}) \cap I(P)^{m}$.

A general question that we can ask is to classify all those configurations of mutually distinct points $Z$ that admit unexpected curves. In the pioneering paper \cite{CHMN}, Cook II, Harbourne, Migliore, and Nagel study unexpected curves from the viewpoint of line arrangements in the complex projective plane, i.e., in their setting $Z$ denotes the set of points which are dual to lines of a given arrangement $\mathcal{A} \subset \mathbb{P}^{2}_{\mathbb{C}}$. It is worth emphasizing here that there are different approaches towards the unexpected curves. In the first approach, the authors focus on unexpected curves of degree $d$ having at a general point $P$ multiplicity $d-1$, but  we can also consider a slightly different scenario (according to our definition presented here). In order to abbreviate, we say that a set $Z$ has the $U(2,d,m)$-property if $Z\in \mathbb{P}^2$ admits an unexpected curve of degree $d$ having at a general point $P$ multiplicity $m$.

In \cite{CHMN}, Cook II, Harbourne, Migliore, and Nagel considered a set of points $Z=\{ z_1,\dots,z_d \}$ in $\mathbb{P}^{2}_{\mathbb{C}}$ and the dual line arrangement $\mathcal{A}_Z=\{\ell_1,\ldots,\ell_d\}$ given by the defining polynomial $f$. The Jacobian ideal of $f$ has its syzygy bundle:
$$0\to \textrm{Syz} \, J_f\to S(d-1)^3\to J_f\to 0.$$
Let us recall that the module of logarithmic derivations $D$ is a submodule of ${\rm Der}(S)$ (the module of all $\mathbb{C}$-linear derivations) consisting of all those elements $\delta \in  {\rm Der}(S)$ such that $\delta(f) \in S \cdot \langle f \rangle$.
Obviously $D$ contains the Euler derivation $\partial_E=x\partial_x+ y\partial_y+z\partial_z$. We know that the quotient $D_0=D/\partial_E$ is isomorphic to  the twist of $ \textrm{Syz} \,J_f$, namely we have the following exact sequence of sheaves
$$0\to {\cal D}_0\to {\cal O}_{\P^{2}_{\mathbb{C}}}^3\to {\cal J}_f(d-1)\to 0,$$
where ${\cal J}_f$ is the sheafification of $J_{f}$ and ${\cal D}_0$ is a locally free sheaf of rank $2$, see the Appendix to \cite{CHMN}.  

It is well-known that $ {\cal D}_0$  restricted to a generic line splits, according to Grothendieck's theorem \cite{Gr}, as a sum of line bundles $ {\cal O}_{\P^{1}_{\mathbb{C}}}(-a_Z)\bigoplus {\cal O}_{\P^{1}_{\mathbb{C}}}(-b_Z)$.  If the line is generic, the pair $(a_Z,b_Z)$ is called the splitting type of  $ {\cal D}_0$ and it satisfies $a_Z+b_Z=|Z|-1$.

Cook II, Harbourne, Migliore and Nagel considered the case when an unexpected curve is of degree $d=m+1$, where $m$ is, as we said above,  the multiplicity that the curve has in a given generic point $P$ (we will say that such unexpected curves have the $U(2,m+1,m)$-property and in what follows an \emph{unexpected curve} means here unexpected curve of type with the  $U(2,m+1,m)$-property).
They proved the following theorem, which we quote in a   version changed according to Dimca's paper \cite{dimca-unexp}.

\begin{theorem}\label{unexp}
Let $Z$ be a finite set of $d$ points in $\mathbb{P}^{2}_{\mathbb{C}}$.  Let $f$ be the polynomial given by the product of lines dual to the points of $Z$ and denote by $\mathcal{A}_{Z}$ the line arrangement given by $f=0$. Let $(a_Z,b_Z)$ be the splitting type of the derivation bundle $ {\cal D}_0$ for $\mathcal{A}_{Z}$.
Let $m(\cal{A}_Z)$ be the  maximal multiplicity among the multiplicities of the points of the arrangement ${\cal A}_Z$. Then $Z$ admits an unexpected curve with the $U(2,m+1,m)$-property if and only if 
$$m({\cal A}_Z)\leq a_Z+1< \frac{|Z|}{2}.$$
If $Z$ admits an unexpected curve, then it admits such a curve of degree $j$ if and only if $j$ satisfies
$$a_Z<j\leq |Z|-a_Z-2.$$
The curve $C$ of minimal possible degree, $a_Z+1$, is unique and it is irreducible iff $a_Z=a_{Z_i}$, where $(a_{Z_i}, b_{Z_i})$ is the splitting type if $D_{0i}$, the bundle of logarithmic derivations for $Z\setminus \{z_i \}, i=1,\dots,d$.
\end{theorem}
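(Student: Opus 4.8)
The plan is to reduce the whole statement to the cohomology of the rank-two derivation bundle $\mathcal{D}_0$ of $\mathcal{A}_Z$, following the Appendix to \cite{CHMN} and the reformulation in \cite{dimca-unexp}. The first step is the apolarity dictionary: for $Z$ dual to the lines of $f$, a plane curve of degree $j$ through $Z$ with multiplicity at least $m$ at a general point $P$ is the same datum as a global section of a suitable twist $\mathcal{D}_0(t)$, $t=t(j)$, satisfying a prescribed vanishing at $P$. Hence the number
\[
\dim_{\mathbb{C}}[I(Z+mP)]_j-\max\Bigl\{\dim_{\mathbb{C}}[I(Z)]_j-\binom{m+1}{2},\ 0\Bigr\}
\]
measures the failure of a Lefschetz-type restriction map (the fat point $mP$ being traded for the restriction of $\mathcal{D}_0$ to a general line through $P$), and is therefore governed by $h^1$ of a twist of $\mathcal{D}_0$. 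In this way ``$Z$ admits an unexpected curve of degree $j$'' becomes a purely cohomological statement about $\mathcal{D}_0$; equivalently, as in \cite{dimca-unexp}, the same information can be packaged through the module $N(f)$ and Theorem~\ref{thm:defect}.

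The second step computes that cohomology from the generic splitting type. As $P$ is general, a general line $L\ni P$ is not a jumping line of $\mathcal{D}_0$, so $\mathcal{D}_0|_L\cong\mathcal{O}_L(-a_Z)\oplus\mathcal{O}_L(-b_Z)$ with $a_Z+b_Z=|Z|-1$ and $a_Z\le b_Z$. Feeding the restriction sequences $0\to\mathcal{D}_0(t-1)\to\mathcal{D}_0(t)\to\mathcal{D}_0(t)|_L\to 0$ into the identity of Step~1 and inducting on $t$ gives $H^0(\mathbb{P}^{2}_{\mathbb{C}},\mathcal{D}_0(t))\neq 0$ exactly for $t\ge a_Z$ (consistent with ${\rm mdr}(f)=a_Z$, legitimate here since $a_Z<|Z|/2$), and pins down the range of $t$ with nonzero relevant $h^1$, whose endpoints are governed by $a_Z$ and $b_Z$. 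Translating back, an unexpected curve of degree $j$ exists if and only if $a_Z<j\le|Z|-a_Z-2=b_Z-1$, and such a $j$ exists if and only if $a_Z+1\le|Z|-a_Z-2$, i.e.\ $a_Z+1<|Z|/2$. This yields the degree range and one half of the numerical criterion.

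The third step is the hypothesis $m(\mathcal{A}_Z)\le a_Z+1$, and here I expect the genuine work. On its own, $a_Z+1<|Z|/2$ does not exclude degeneracies: a near-pencil has $a_Z=1$ but $m(\mathcal{A}_Z)=|Z|-1$ and no unexpected curve. A point of $\mathcal{A}_Z$ of multiplicity $m_0$ is dual to a set of $m_0$ collinear points of $Z$, lying on some line $\ell^{*}$; when $m_0\ge a_Z+2$ this subset interferes with the conditions $Z$ imposes in the small degrees, so the Step~2 count in the smallest degree is no longer the honest value of the left-hand side of \eqref{ineq-unexpected}, and in fact it forces the candidate curve of degree $a_Z+1$ to contain $\ell^{*}$, so that curve fails the strict inequality; for the larger admissible degrees one must instead invoke the structure theory of line arrangements carrying a point of large multiplicity (in particular the bound ${\rm mdr}(f)\le|Z|-m(\mathcal{A}_Z)$) to conclude that every candidate is ``expected''. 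Conversely, when $m(\mathcal{A}_Z)\le a_Z+1$, the set $Z$ imposes independent conditions in the relevant degrees, the Step~2 count is exact, and the strict inequality holds, so an unexpected curve of every degree $a_Z<j\le|Z|-a_Z-2$ really does occur. Together with Step~2 this gives the claimed equivalence and the degree range, with $a_Z+1$ the minimal degree.

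It remains to treat the last sentence. The minimal-degree curve $C$ (degree $a_Z+1$) is unique because the space of degree-$a_Z$ Jacobian syzygies of $f$ is one-dimensional in this range: a minimal-degree syzygy is primitive (its entries have no common factor), so two $\mathbb{C}$-independent ones $\rho,\rho'$ would be independent over the field of fractions, whence $(\partial_x f,\partial_y f,\partial_z f)$ would be proportional over that field to $\rho\times\rho'$ --- impossible by a degree count, since $2a_Z<|Z|-1$; the unique such syzygy produces $C$. For irreducibility, I would argue by deletion: for each $i$ one has $a_Z-1\le a_{Z_i}\le a_Z$, where $Z_i=Z\setminus\{z_i\}$ and $a_{Z_i}+b_{Z_i}=|Z|-2$, both bounds being elementary (via corrections by a multiple of the Euler derivation). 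If $C$ is reducible then, under the standing hypothesis $m(\mathcal{A}_Z)\le a_Z+1$, a line component $\ell$ of $C$ must pass through $P$ and contain exactly one point $z_i$ of $Z$, so $C\setminus\ell$ has degree $a_Z$, multiplicity $a_Z-1$ at $P$, and passes through $Z_i$; the mere existence of such a curve is the unexpectedness of $Z_i$ in degree $a_Z$, which by Step~2 forces $a_{Z_i}=a_Z-1$. Conversely, if $a_{Z_i}=a_Z-1$ for some $i$, then the union of the unique minimal unexpected curve of $Z_i$ with the line $\overline{Pz_i}$ is an unexpected curve of $Z$ of degree $a_Z+1$, hence equals $C$ by uniqueness, so $C$ is reducible. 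Therefore $C$ is irreducible if and only if $a_{Z_i}=a_Z$ for every $i$. Throughout, the principal obstacle is Step~3 --- the precise role of $m(\mathcal{A}_Z)$ in forcing, or not, the degeneracy; Steps~1 and~2 amount to bookkeeping with the defining exact sequences once the framework of \cite{CHMN} is in place.
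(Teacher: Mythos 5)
The paper offers no proof of Theorem \ref{unexp} to compare against: it is quoted (in Dimca's reformulation) from \cite{CHMN} and \cite{dimca-unexp}. Judged on its own terms, your outline correctly reproduces the architecture of the Cook~II--Harbourne--Migliore--Nagel argument: the correspondence between $[I(Z+mP)]_{m+1}$ and the restriction of $\mathcal{D}_0$ to a general line (the Faenzi--Vall\`es dictionary), the degree window $a_Z<j\le |Z|-a_Z-2=b_Z-1$ read off from the generic splitting, the one-dimensionality of the degree-$a_Z$ syzygy space when $2a_Z<|Z|-1$ for uniqueness, and the deletion criterion $a_{Z_i}=a_Z$ for irreducibility. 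Those parts are sound in outline, though Step~1 is asserted rather than proved -- the identification underlying it is itself a nontrivial theorem that would need to be cited or established.

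The genuine gap is exactly where you flag it: Step~3. Neither implication of the equivalence involving $m(\mathcal{A}_Z)\le a_Z+1$ is actually established; you only name the obstruction and defer to ``structure theory.'' Note also that the mechanism is not that ``$Z$ imposes independent conditions in the relevant degrees'' in the abstract: the inequality \eqref{ineq-unexpected} is measured against the \emph{actual} value of $\dim_{\mathbb{C}}[I(Z)]_j$, not the expected one. A point of $\mathcal{A}_Z$ of multiplicity $m_0\ge a_Z+2$ means $m_0$ collinear points of $Z$, which for small $j$ force the dual line into every member of $[I(Z)]_j$, inflate $\dim_{\mathbb{C}}[I(Z)]_j$, and raise the right-hand side of \eqref{ineq-unexpected} until it absorbs the candidate section of $\mathcal{D}_0$; conversely, when $m(\mathcal{A}_Z)\le a_Z+1$ one must show the section produces a curve of multiplicity exactly $m$ at $P$ that the count cannot explain. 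Carrying this out is the actual content of the theorem (CHMN's Theorems 1.2 and 5.9, or Dimca's route through $\mathrm{mdr}(f)$, $N(f)$ and Theorem \ref{thm:defect}); without it, the first ``if and only if'' and the exactness of the degree window remain unproved, and since Step~2's window is derived before Step~3, its ``only if'' direction is also conditional on the missing argument.
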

 Dimca in \cite{dimca-unexp} proved that in the setting as above $a_Z$ may be replaced by the minimal degree of syzygies $\textrm{mdr}(f)$ and $a_{Z_i}$ by the minimal degree of syzygies from $ \textrm{Syz} \,J_{f_i}$, where $f_i$ is the product of lines dual to points from $Z\setminus\{z_i \}$.

\subsection{Nearly free arrangements}

Let us recall \cite[Example 6.1]{CHMN} which motivates our investigations in this section.

\begin{example}
\label{exp:CHMN}
Consider the arrangement $\mathcal{A}\subset\mathbb{P}^{2}_{\mathbb{C}}$ of $19$ lines given by the following defining polynomial: \\
\begin{align*}
Q(x,y,z) &= xyz(x+y)(x-y)(2x+y)(2x-y)(x+z)(x-z)(y+z)(y-z)(x+2z) \\ &(x-2z)(y+2z)(y-2z)(x-y+z)(x-y-z)(x-y+2z)(x-y-2z).
\end{align*}
We can compute the minimal free resolution of the Milnor algebra $M(Q)$ obtaining
$$0 \rightarrow S(-30) \rightarrow S(-29)^{2} \oplus S(-26) \rightarrow S(-18)^{3} \rightarrow S.$$
This nearly free arrangement, as it was said in \cite{CHMN}, is \emph{close} to be free in the sense of the addition-deletion procedure, namely if we remove the line $2x+y$, then the new arrangement $\mathcal{A}'$ is free with exponents $d_{1}=7$ and $d_{2}=10$. It turns out that the set of duals to lines in $\mathcal{A}$ has $U(2,9,8)$-property, i.e., it admits an unexpected curve of degree $9$ having at a general point $P$ multiplicity $8$. 
\end{example}


Suppose now we have a set $Z=\{z_1,\dots,z_d\}$, such, that the dual lines give a nearly free arrangement ${\cal A}$ with the exponents $(d_1,d_2)$, where $d_1+d_2=d$.

Abe and Dimca in \cite{abe-dimca}, Corollary 3.5, give the characterization of the splitting type of a nearly free line bundle (we quote it with a slightly changed form).

\begin{theorem}[\cite{abe-dimca}, Corollary 3.5]\label{a-d}
An arrangement ${\cal A}$ is nearly free with the exponents $(d_1,d_2)$  if and only if either

\begin{itemize}
    \item[a)]  $d_1=d_2$ and for every line the bundle $ {\cal D}_0$ splits as  $ {\cal O}_{\mathbb{P}^{1}_{\mathbb{C}}}(-(d_1-1))\bigoplus {\cal O}_{\P^{1}_{\mathbb{C}}}(-d_1)$, or
\item[b)] $d_1<d_2$ and for a generic line $ {\cal D}_0$ splits as  $ {\cal O}_{\P^{1}_{\mathbb{C}}}(- d_1 )\bigoplus {\cal O}_{\P^{1}_{\mathbb{C}}}(-(d_2-1))$. 
\end{itemize}
\end{theorem}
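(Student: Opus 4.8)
The statement to prove is Theorem \ref{a-d} (Abe--Dimca, Corollary 3.5): an arrangement $\mathcal{A}$ of $d$ lines is nearly free with exponents $(d_1,d_2)$, $d_1+d_2=d$, if and only if the derivation bundle $\mathcal{D}_0$ restricted to a generic line has the prescribed splitting type --- namely $(d_1-1,d_1)$ when $d_1=d_2$, and $(d_1,d_2-1)$ when $d_1<d_2$. Since we are allowed to use Theorem \ref{thm:defect} and the Dimca--Sticlaru resolution for the Milnor algebra of a nearly free curve, the natural strategy is to translate the statement entirely into the language of the minimal degree of a Jacobian relation $r=\mathrm{mdr}(f)$ and then invoke Grothendieck together with a semicontinuity argument. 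The first step is to record the global-to-generic-splitting dictionary: $\mathcal{D}_0$ is a rank-$2$ bundle on $\mathbb{P}^2_{\mathbb{C}}$ with $c_1(\mathcal{D}_0)=-(d-1)$, obtained as the sheafification of $\mathrm{Syz}\,J_f$ (shifted), so that its splitting type $(a_Z,b_Z)$ on a generic line $\ell$ satisfies $a_Z+b_Z=d-1$, and by Dimca's observation quoted in the excerpt $a_Z=\mathrm{mdr}(f)=r$ whenever $r<(d-1)/2$, while in general $a_Z=\min(r, \lceil (d-1)/2\rceil)$-type behaviour holds. I would make this precise by noting $H^0(\ell, \mathcal{D}_0|_\ell(t))=H^0(\mathbb{P}^1,\mathcal{O}(t-a_Z)\oplus\mathcal{O}(t-b_Z))$ and comparing with $H^0(\mathbb{P}^2,\mathcal{D}_0(t))$, the latter being exactly the degree-$t$ part of $\mathrm{Syz}\,J_f$ up to the standard shift.

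\medskip
The second step is the forward implication. Assume $\mathcal{A}$ is nearly free with exponents $(d_1,d_2)$. By the Dimca--Sticlaru resolution of $M(f)$ quoted above, the syzygy module $\mathrm{Syz}\,J_f$ has generators in degrees $d_1, d_2, d_2$ (after the appropriate normalization $d_1+d_2=d$), hence $\mathrm{mdr}(f)=d_1$. One then splits into the two cases. If $d_1<d_2$ then $d_1<d/2$, so Theorem \ref{thm:defect}(a) applies and $r=d_1<(d-1)/2$ forces $a_Z=d_1$, whence $b_Z=d-1-d_1=d_2-1$; this is case (b). If $d_1=d_2$, then $r=d_1=d/2$ sits in the boundary regime governed by Theorem \ref{thm:defect}(b); here one shows $\mathcal{D}_0|_\ell$ cannot split evenly as $(d_1-1,d_1-1)$ (that would force $c_1=-2(d_1-1)=-(d-2)\neq -(d-1)$, a parity obstruction), nor as $(d_1-2,d_1)$ or more unbalanced (which would reintroduce a syzygy of degree $<d_1$, contradicting $\mathrm{mdr}(f)=d_1$ as extracted from the resolution); the only remaining option compatible with $a_Z+b_Z=d-1$ is $(d_1-1,d_1)$. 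This gives case (a).

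\medskip
The third step is the converse. Suppose $\mathcal{D}_0$ restricted to a generic line has the splitting type listed in (a) or (b). From the splitting type we read off $\mathrm{mdr}(f)$: in case (b), $a_Z=d_1<d/2$ gives $r=d_1$ (using Dimca's identification $a_Z=\mathrm{mdr}(f)$ in this range); in case (a), $a_Z=d_1-1$ with $a_Z+1=d_1=d/2$, which puts us in the regime $r\ge (d-2)/2$. Then I plug $r$ into Theorem \ref{thm:defect} and verify by direct computation that $\nu(\mathcal{A})=1$, i.e.\ that $\mathcal{A}$ is nearly free --- in case (b) using part (a) of that theorem together with the combinatorial identity $\tau(\mathcal{A})=(d-1)^2-d_1(d_2-1)-1$, which itself follows because for line arrangements $\tau$ is combinatorially determined and can be recovered from the splitting type (Dimca's formula $\tau(C)=(d-1)^2-r(d-1-r)-\nu$ run in reverse); in case (a) using part (b). Once $\nu(\mathcal{A})=1$ we conclude $\mathcal{A}$ is nearly free, and the exponents are forced to be $(d_1,d_2)$ by the Dimca--Sticlaru resolution, since $\mathrm{mdr}(f)=d_1$ and $d_1+d_2=d$.

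\medskip
\textbf{The main obstacle.} The delicate point is the even case $d_1=d_2$, which lives exactly on the boundary $r=d/2$ where Theorem \ref{thm:defect} switches branches and where the generic splitting type is no longer simply $(\mathrm{mdr}(f), d-1-\mathrm{mdr}(f))$. Here one must argue carefully that a nearly free even-exponent arrangement forces the \emph{unbalanced} splitting $(d_1-1,d_1)$ rather than the balanced $(d_1-1,d_1-1)$ one might naively expect --- the resolution of the point is really the parity constraint $a_Z+b_Z=d-1$ being odd, combined with the fact that an unexpected extra syzygy in degree $d_1-1$ would contradict $\mathrm{mdr}(f)=d_1$. Making this rigorous requires relating $H^0$ of the bundle on the line to global sections on $\mathbb{P}^2$ (a semicontinuity / restriction-sequence argument for $0\to\mathcal{D}_0(t-1)\to\mathcal{D}_0(t)\to\mathcal{D}_0|_\ell(t)\to 0$) and ruling out jumping behaviour at the generic line; this is where I expect to spend most of the effort, and where citing the Appendix of \cite{CHMN} and the cohomological characterization of nearly freeness from \cite{abe-dimca} itself does the real work.
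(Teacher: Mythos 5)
The paper does not prove this statement: it is quoted (in the authors' words, ``in a slightly changed form'') from Abe--Dimca \cite{abe-dimca}, Corollary 3.5, so there is no internal proof to compare against, and your proposal has to stand on its own. The forward direction of your outline is workable, but one step is wrong as argued. In the balanced case $d_1=d_2$ you exclude splittings more unbalanced than $(d_1-1,d_1)$ on the grounds that they ``would reintroduce a syzygy of degree $<d_1$''. This is backwards: one only has the one-way inequality $a_Z\leq \mathrm{mdr}(f)$ (a global syzygy restricts to a nonzero section on a generic line), and a small generic splitting degree does \emph{not} lift to a global syzygy of that degree --- a generic nodal arrangement has nearly balanced splitting type $a_Z\approx(d-1)/2$ while $\mathrm{mdr}(f)=d-2$. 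The clean way to pin down the splitting is Theorem \ref{th:splittingTypeConnection}: for a nearly free curve $\tau(C)=(d-1)^2-d_1(d_2-1)-1$, hence $a_Zb_Z=d_1(d_2-1)$, which together with $a_Z+b_Z=d_1+d_2-1$ forces $\{a_Z,b_Z\}=\{d_1,d_2-1\}$ in both cases at once. Separately, case a) asserts the splitting type on \emph{every} line, and your argument only ever discusses a generic line; the absence of jumping lines requires a cohomological analysis of $\mathcal{D}_0$ restricted to special lines that you do not supply.

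The converse is where the proposal genuinely breaks. Your plan to recover $\tau(\mathcal{A})$ from the splitting type by ``running Dimca's formula in reverse'' is circular: that formula is $\nu(C)=(d-1)^2-\tau(C)-a_Zb_Z$, so extracting $\tau$ from $(a_Z,b_Z)$ presupposes knowing $\nu$, which is exactly the quantity you are trying to show equals $1$. The circularity is not repairable from the data in conditions a) and b) alone, because the ``if'' direction is false as literally stated: a \emph{free} arrangement with exponents $(d_1,d_2-1)$ --- for instance a near-pencil, which is free with exponents $(1,d-2)$ --- has splitting type $(d_1,d_2-1)$ on every line and so satisfies condition b) verbatim with $(d_1,d_2)=(1,d-1)$, yet it is free rather than nearly free; similarly a free arrangement with exponents $(d_1-1,d_1)$ satisfies condition a). The original Abe--Dimca corollary carries a clause that the quotation here suppresses (in the unbalanced case, the existence of at least one special line where the splitting jumps to $(d_1-1,d_2)$; equivalently, non-freeness), and any correct proof of the converse must use it --- your sketch has no mechanism for doing so. Note that the paper itself only ever invokes the ``only if'' direction, so this omission is harmless there, but it is fatal to your second and third steps.
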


\begin{proposition}
Let $Z=\{z_1,\dots,z_d\}\subset \P^{2}_{\mathbb{C}}$ be a set of points such that  the dual lines give a nearly free arrangement ${\cal A}$ with the exponents $(d_1,d_2)$. Then $Z$ admits an unexpected curve  with $U(2,d_1+1,d_1)$-property if and only if $d_2-d_1\geq 3$.
\end{proposition}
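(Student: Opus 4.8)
The plan is to apply Theorem~\ref{unexp} directly, using Dimca's reformulation so that we may work with $\mathrm{mdr}(f)$ in place of $a_Z$. Recall that for a nearly free arrangement $\mathcal{A}$ with exponents $(d_1,d_2)$, $d_1 \le d_2$, we have $\mathrm{mdr}(f) = d_1$; this follows from the shape of the minimal free resolution of $M(f)$ recorded in the Dimca--Sticlaru theorem (the first syzygy degree is $d_1$), or equivalently from Theorem~\ref{a-d}, which tells us the generic splitting type of $\mathcal{D}_0$ is $(d_1, d_2-1)$ when $d_1 < d_2$ and $(d_1-1, d_1)$ when $d_1 = d_2$. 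In either case the smaller entry of the splitting type equals $a_Z = d_1$ (in the equal-exponents case $a_Z = d_1 - 1$, but then $d_2 - d_1 = 0 < 3$ and the statement claims no unexpected curve, which we must also confirm is consistent). So the first step is to pin down $a_Z$ in terms of $(d_1,d_2)$.

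Next I would feed this into the numerical criterion of Theorem~\ref{unexp}: $Z$ admits an unexpected curve with the $U(2,m+1,m)$-property if and only if
$$m(\mathcal{A}_Z) \le a_Z + 1 < \frac{|Z|}{2}.$$
Here $|Z| = d = d_1 + d_2$. With $a_Z = d_1$, the right-hand inequality $d_1 + 1 < (d_1+d_2)/2$ rearranges to $d_2 - d_1 > 2$, i.e. $d_2 - d_1 \ge 3$ since these are integers. This is exactly the condition in the statement, so the right inequality is equivalent to the claimed hypothesis. It then remains to check that the left inequality $m(\mathcal{A}_Z) \le d_1 + 1$ is automatically satisfied whenever $d_2 - d_1 \ge 3$, so that it imposes no extra constraint. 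The point here is a bound on the maximal multiplicity of a point of a nearly free line arrangement in terms of its exponents: one knows (e.g. from the theory relating multiple points to the degree of the smallest syzygy, as in the du~Plessis--Wall / Dimca inequalities, or from the fact that a point of multiplicity $p$ on $\mathcal{A}_Z$ forces a syzygy of degree $d - p$) that $m(\mathcal{A}_Z) \le d - d_1 = d_2$ always, and in the nearly free case one can do better. The degree of the unique minimal unexpected curve would be $a_Z + 1 = d_1 + 1$, matching the $U(2,d_1+1,d_1)$-property in the conclusion, and the range of possible degrees $a_Z < j \le |Z| - a_Z - 2$ becomes $d_1 < j \le d_2 - 2$, which is nonempty precisely when $d_2 - d_1 \ge 3$.

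The main obstacle is the verification of the left-hand inequality $m(\mathcal{A}_Z) \le d_1 + 1$ under the assumption $d_2 - d_1 \ge 3$: unlike the right-hand inequality this is not a formal rearrangement but requires a genuine structural input about how large a multiple point can be on a nearly free arrangement with given exponents. I expect the cleanest route is as follows: if $p$ is a point of multiplicity $m_0 = m(\mathcal{A}_Z)$, restricting $\mathcal{D}_0$ to a generic line through $p$ (or using the local-to-global syzygy coming from the pencil of lines through $p$) produces a Jacobian syzygy of degree $d - m_0$, hence $\mathrm{mdr}(f) \le d - m_0$, i.e. $d_1 \le d_1 + d_2 - m_0$, which only gives $m_0 \le d_2$ — not quite enough. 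To sharpen this to $m_0 \le d_1 + 1$ one uses that $\mathcal{A}_Z$ is \emph{nearly} free, not merely $3$-syzygy: Abe--Dimca's Theorem~\ref{a-d} constrains the splitting behaviour on \emph{every} line (not just generic ones), and a line through the big point $p$ is special; combining the jump in splitting type at such a line with the nearly free constraint $\nu(\mathcal{A}_Z) = 1$ forces $d - m_0 \ge d_2 - 1$, i.e. $m_0 \le d_1 + 1$. So the core of the argument is this local analysis at a maximal multiplicity point, and it is the step I would write out most carefully; everything else is bookkeeping with the inequalities of Theorem~\ref{unexp}.
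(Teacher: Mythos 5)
Your overall strategy coincides with the paper's: read off the splitting type $(a_Z,b_Z)$ from Theorem~\ref{a-d}, plug it into the criterion of Theorem~\ref{unexp}, and observe that for $d_1<d_2$ the inequality $a_Z+1<|Z|/2$ with $a_Z=d_1$ is precisely $d_2-d_1\geq 3$, while the cases $d_2-d_1\in\{0,1,2\}$ (including $d_1=d_2$, where $a_Z=d_1-1$) all violate it. That part, which settles the ``only if'' direction, matches the paper's case analysis and is correct.

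The step you single out as the main obstacle, however, is not merely the hardest step --- it fails. For the ``if'' direction you need the first inequality of Theorem~\ref{unexp}, namely $m(\mathcal{A}_Z)\leq a_Z+1=d_1+1$, and you assert that nearly freeness upgrades the elementary bound $m(\mathcal{A}_Z)\leq d_2$ to $m(\mathcal{A}_Z)\leq d_1+1$. This is false. Take $d-2$ concurrent lines together with two further lines in general position, $d\geq 7$. Then $t_{d-2}=1$ and $t_2=2d-3$, so $\tau=(d-3)^2+(2d-3)=d^2-4d+6$; the pencil through the point of multiplicity $d-2$ gives a syzygy of degree $2$ (explicitly $h\,\partial_z-\tfrac{h_z}{d}\partial_E$, with $h$ the product of the two extra lines), and since $d-2\geq d/2$ this degree is known to equal $\mathrm{mdr}(f)$; Theorem~\ref{thm:defect}(a) then yields $\nu=(d-1)^2-2(d-3)-\tau=1$. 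Hence this arrangement is nearly free with exponents $(2,d-2)$, so $d_2-d_1=d-4\geq 3$, and yet $m(\mathcal{A}_Z)=d-2>3=d_1+1$. By Theorem~\ref{unexp} the dual set of points admits no unexpected curve; for $d=7$ one can also verify by hand that the cubics through the seven dual points (five of them collinear) with a node at a general point $P$ form exactly the expected one-dimensional space, spanned by the line through the five collinear points times the two lines joining $P$ to the remaining two points. So the local analysis at the maximal multiplicity point that you defer cannot be carried out, and no argument can close this gap without adding the hypothesis $m(\mathcal{A}_Z)\leq d_1+1$ to the statement. For what it is worth, the paper's own proof of this implication is a one-line appeal to Theorems~\ref{unexp} and~\ref{a-d} that silently skips the multiplicity condition as well; you have correctly located the weak point, but your proposed repair rests on a false bound.
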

 \begin{proof}
 From Theorem \ref{unexp} and from  Theorem \ref{a-d} we have that if 
$$d_2-d_1\geq 3$$ then $Z$ admits an unexpected curve.

Assume that $d_2-d_1\leq 2$, then we have three possibilities. Either exponents of ${\cal A}$ are $(d_1,d_1+2)$, or $(d_1,d_1+1)$, or both of them are equal $d_1$.
Thus from Theorem \ref{a-d} the splitting types $(a_{Z},b_{Z})$ are $(d_1,d_1+1)$, $(d_1,d_1)$ and $(d_1-1,d_1)$ respectively. According to Theorem \ref{unexp} in none of these cases does $Z$ admit unexpected curve.
\end{proof}
\begin{example}
Let us come back to family $\mathcal{NF}_{n}$ with $n\geq 3$. Then the dual set of points $Z_{n}$ to $\mathcal{NF}_{n}$ admits an unexpected curve with $U(2,d_{1}+1,d_{1})$-property if and only if
$$d_{2}-d_{1} = 2n-2 - (n+1) = n-3 \geq 3,$$
so exactly when $n\geq 6$.
\end{example}
\begin{remark}
In the case of Example \ref{exp:CHMN}, the exponents are $d_{1} = 8$ and $d_{2} = 11$, so we have exactly $d_{2}-d_{1}= 3$.
\end{remark}

\subsection{Plus-one generated and $3$-syzygy line arrangements}\label{subsec:plus}

Let us recall the definition of $k$-syzygy curves from \cite{cdi}.  The authors consider there a reduced complex plane curve  $C\subset \P^{2}_{\mathbb{C}}$ given by an equation $f=0$. In our case 
$C$ is given by the lines of the arrangement ${\cal A}_Z$, lines dual to the points $Z\subset \P^{2}_{\mathbb{C}}$. 

\begin{definition}\label{df:k-syz}
We say that  $C$ is a $k$-syzygy curve if the $S$-module $\textrm{Syz}(J_f)$ is minimally generated by $k$ homogeneous syzygies $\{r_1,r_2,\ldots,r_k\}$ of degree $d_i=\deg r_i$, ordered such that $$1\leq d_1\leq d_2\leq\ldots\leq d_k.$$ 

The multiset $(d_1,d_2,\ldots,d_k)$ is called the exponents of the plane curve $C$ and $\{r_1,r_2,\ldots,r_k\}$ is said to be a minimal set of generators for the $S$-module $\textrm{Syz}(J_f).$ 
\end{definition}
In particular,

\begin{itemize}
    \item[a)] a $2$-syzygy curve $C$ is {\emph{free}}, since then the $S$-module $\textrm{Syz}(J_f)$ is a free module of rank $2$;
    \item[b)] a $3$-syzygy curve is said to be {\emph{nearly free}} exactly when $d_3=d_2$ and $d_1+d_2=d$;
    \item[c)] a $3$-syzygy line arrangement is said to be a {\emph{plus-one generated line arrangement}} of level $d_3$ exactly when $d_1+d_2=d$ and $d_3 \geq d_2$.
\end{itemize}

Let ${\cal A}$ be an arrangement of lines in $\P^2_{\mathbb{C}}$. The general form of the minimal resolution for the Milnor algebra $M(f)$ has the following form (see for instance \cite{cdi} and \cite{HS12}):

\begin{equation}\label{RMM}
0\to \displaystyle{\oplus_{i=1}^{m-2}S(-e_i)}\to \displaystyle{\oplus_{i=1}^{m}S(1-d-d_i)}\to S^3(1-d)\to S,
\end{equation} 
with $e_1\leq e_2\leq\ldots\leq e_{m-2}$ and $1 \leq d_1\leq d_2\leq\cdots\leq d_m$. \\
Moreover $$e_j=d+d_{j+2}-1+\epsilon_j,$$ for $j=1,\ldots, m-2$ and some integers $\epsilon_j\geq 1.$

The minimal resolution of $N(f)$ has the following form (see again \cite{cdi} and \cite{HS12}):
 $$0\to \displaystyle{\oplus_{i=1}^{m-2}S(-e_i)}\to\displaystyle{\oplus_{i=1}^mS(-\ell_i)}\to\displaystyle{\oplus_{i=1}^m}S(d_i-2(d-1))\to \displaystyle{\oplus_{i=1}^{m-2}S(e_i-3(d-1))},$$
 where $\ell_i=d+d_i-1$. \\
 Thus the initial degree of $N(f)$, denoted as $\sigma(C)$,
is equal to
\begin{equation}\label{sigma} 
 \sigma(C)=3(d-1)-e_{m-2}=2(d-1)-d_m-\epsilon_{m-2}.
\end{equation} 

We will use the following result \cite[Corollary 1.4]{cdi}.

\begin{theorem}\label{COR10}
 Let $C$ be a plus-one generated curve of degree $d\geq 3$ with $(d_1,d_2,d_3)$, which is not nearly free, i.e., $d_2<d_3.$ Set $k_j=2d-d_j-3$ for $j=1,2,3$ and $T=3d-6$.
 Then one has the following minimal free resolution of $N(f)$ as a graded $S-$module:
 \begin{align*}0\to S(-d-d_3)\to S(-d-d_3+1)\oplus S(-k_1-2)\oplus S(-k_2-2)\to\\ \to S(-k_1-1)\oplus S(-k_2-1)\oplus S(-k_3-1)\to S(-k_3).\end{align*} In particular, $\sigma(C)=k_3<k_2 \leq \frac{T}{2}$ and the numbers $n(f)_j$ are given by following formulas: 
 \begin{enumerate}
     \item[a)] $n(f)_j=0$ for $j<k_3$;
     \item[b)] $n(f)_j=j-k_3+1$ for $k_3\leq j\leq k_2$;
     \item[c)] $n(f)_j=d_3-d_2+1=\nu(C)$ for $k_2\leq j\leq \frac{T}{2}.$
 \end{enumerate}
\end{theorem}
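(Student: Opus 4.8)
The plan is to derive everything from the general resolution \eqref{RMM} specialized to a $3$-syzygy curve that is plus-one generated but not nearly free, i.e.\ $m=3$, $d_1+d_2=d$, and $d_2<d_3$. First I would write down the resolution of the Milnor algebra $M(f)$ in this case: \eqref{RMM} becomes $0\to S(-e_1)\to S(1-d-d_1)\oplus S(1-d-d_2)\oplus S(1-d-d_3)\to S^3(1-d)\to S$, with a single integer $e_1=d+d_3-1+\epsilon_1$, $\epsilon_1\geq 1$. The key first step is to pin down $e_1$ exactly: here I would invoke the plus-one generated hypothesis $d_1+d_2=d$. A Hilbert-series computation from the resolution of $M(f)$ (comparing against the known Hilbert function of the Milnor algebra of a reduced plane curve, or equivalently against the resolution of $N(f)$ recalled in the excerpt) forces $\epsilon_1=1$, hence $e_1=d+d_3$. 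This is the crux of the argument, and the one place where the precise numerical hypothesis $d_1+d_2=d$ enters; I expect this to be the main obstacle, since it requires matching graded pieces carefully rather than just a formal manipulation. Once $e_1=d+d_3$ is established, $\sigma(C)=2(d-1)-d_m-\epsilon_{m-2}=2(d-1)-d_3-1=2d-d_3-3=k_3$ follows immediately from \eqref{sigma}.

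Next I would translate the resolution of $M(f)$ into the resolution of $N(f)$ using the duality/mapping-cone description from \cite{HS12} recalled just above Theorem \ref{COR10}. With $m=3$, $\ell_i=d+d_i-1$, and $e_1=d+d_3$, the general four-term resolution of $N(f)$ specializes to
\begin{align*}
0\to S(-d-d_3)\to S(-d-d_1+1)\oplus S(-d-d_2+1)\oplus S(-d-d_3+1)\\
\to S\big(d_1-2(d-1)\big)\oplus S\big(d_2-2(d-1)\big)\oplus S\big(d_3-2(d-1)\big)\to S\big(d_3-3(d-1)\big).
\end{align*}
Then I would rewrite each twist in terms of the $k_j=2d-d_j-3$ and $T=3d-6$: note $d_i-2(d-1)=-(k_i+1)$, $d_3-3(d-1)=-(k_3+3(d-1)-2d+3)=-(k_3+T-? )$; more directly one checks $3(d-1)-d_3=k_3+(d-3)$, and using $d_1+d_2=d$ one has $-d-d_1+1=-(k_2+2)$ and $-d-d_2+1=-(k_1+2)$ and $-d-d_3+1=-(d_3-d_1-d_2+d+d_3-1)$; here I would simply carry out the bookkeeping substitutions until the resolution reads exactly as in the statement, $0\to S(-d-d_3)\to S(-d-d_3+1)\oplus S(-k_1-2)\oplus S(-k_2-2)\to S(-k_1-1)\oplus S(-k_2-1)\oplus S(-k_3-1)\to S(-k_3)$. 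The inequalities $k_3<k_2\le T/2$ come from $d_2<d_3$ together with $d_1\le d_2$ and $d_1+d_2=d$: indeed $k_3<k_2\iff d_3>d_2$, and $k_2\le T/2\iff 2d-d_2-3\le (3d-6)/2\iff d_2\ge (d)/2+0=d/2$, which holds since $d_1\le d_2$ and $d_1+d_2=d$.

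Finally, for the formulas for $n(f)_j$, I would read the Hilbert function of $N(f)$ off the resolution above via the additive formula for Hilbert series (\cite{eisenbud}, as cited in the excerpt): $n(f)_j=\binom{j+2-k_3}{2}_{\text{corrections}}$ — more precisely, since $N(f)$ is a graded module of projective dimension reflected in the four-term complex, $\mathrm{HS}(N(f))=\big(t^{k_3}-t^{k_1+1}-t^{k_2+1}-t^{k_3+1}+t^{k_1+2}+t^{k_2+2}+t^{d+d_3-1}-t^{d+d_3}\big)/(1-t)^3$. Expanding and collecting, the contribution of the pair $t^{k_3}$ and $t^{k_3+1}$ in the numerator gives, after dividing by $(1-t)^3$ and then noting the module is actually supported in a bounded range (so the apparent polynomial growth from $(1-t)^{-3}$ is cut off), a net count $n(f)_j=j-k_3+1$ on the initial range, which then stabilizes at the value $\nu(C)$ once the $k_2$-twists kick in; the symmetry of $N(f)$ (it is self-dual up to twist by $T$, from the structure of the complex) forces the plateau to persist up to $j=T/2$. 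Concretely: (a) for $j<k_3$ all numerator terms active so far vanish, giving $n(f)_j=0$; (b) for $k_3\le j\le k_2$ only the $k_3$-twists contribute, giving the arithmetic-progression value $j-k_3+1$; (c) for $k_2\le j\le T/2$ the $k_1$- and $k_2$-twists enter and the count levels off at $d_3-d_2+1$, which equals $\nu(C)=\dim N(f)_{k_2}$ by part (b) evaluated together with the extra generator, matching Theorem \ref{thm:defect}. The one subtlety to be careful about is justifying that the plateau extends all the way to $\lfloor T/2\rfloor$ rather than terminating earlier; this follows from the self-duality built into the four-term resolution (the outer twists $-d-d_3$ and $-k_3$ differ by $T$), so I would state that symmetry explicitly and use it to close the range.
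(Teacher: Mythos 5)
The paper itself contains no proof of Theorem \ref{COR10}: it is quoted verbatim from \cite[Corollary 1.4]{cdi}, so there is no in-text argument to compare yours against. Judged on its own merits, your proposal follows essentially the derivation used in that source and is correct in outline. The load-bearing step is exactly the one you single out: showing $e_1=d+d_3$, i.e.\ $\epsilon_1=1$. Your sketch (``a Hilbert-series computation forces this'') does go through, and it is worth recording why: writing the Hilbert series of $M(f)$ as $N(t)/(1-t)^3$ with $N(t)=1-3t^{d-1}+t^{d-1+d_1}+t^{d-1+d_2}+t^{d-1+d_3}-t^{e_1}$, the Hilbert polynomial of $M(f)$ is the constant $\tau(C)>0$, so $N$ must vanish to order exactly two at $t=1$; the condition $N'(1)=0$ reads $e_1=d_1+d_2+d_3$, which equals $d+d_3$ precisely because $d_1+d_2=d$. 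Granting that, the passage to the stated resolution of $N(f)$ is pure bookkeeping, and your identities $\ell_1=k_2+2$, $\ell_2=k_1+2$, $e_1-3(d-1)=-k_3$ are the correct ones (the intermediate line ``$3(d-1)-d_3=k_3+(d-3)$'' is off by $d$, but you discard it, so it is harmless). For parts (a)--(c) you do not actually need the self-duality of $N(f)$ that you invoke to push the plateau out to $T/2$: after cancelling one factor of $(1-t)$ the Hilbert series becomes $\bigl(t^{k_3}-t^{k_1+1}-t^{k_2+1}+t^{d+d_3-1}\bigr)/(1-t)^2$, and the elementary inequalities $k_2\leq T/2\leq k_1$ and $d+d_3-1>T/2$ (all consequences of $d_1\leq d_2<d_3$ and $d_1+d_2=d$) show directly that in the range $j\leq T/2$ only the $t^{k_3}$ term, and from degree $k_2+1$ on also the $t^{k_2+1}$ term, contribute; this yields (a)--(c) verbatim. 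So the proposal is sound; its only real deficiency is that the pivotal computation of $e_1$ is asserted rather than carried out.
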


The next result is due to Abe and Dimca \cite[Corollary 3.3]{abe-dimca}.
\begin{theorem}
\label{th:splittingTypeConnection}
Let $C$ be a reduced curve in $\mathbb{P}^{2}_{\mathbb{C}}$ of degree $d$. Recall that
$$\nu(C) = {\rm max}_{k} \{ {\rm dim}_{\mathbb{C}} \, N(f)_{k}\}.$$
Then
 $$\nu(C)=(d-1)^2-\tau(C)-a_Z b_Z.$$
\end{theorem}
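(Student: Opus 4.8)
The plan is to derive the formula $\nu(C) = (d-1)^2 - \tau(C) - a_Z b_Z$ by combining Dimca's explicit formula for the defect (Theorem \ref{thm:defect}) with the known relationship between the splitting type $(a_Z,b_Z)$ of $\mathcal{D}_0$ on a generic line and the minimal degree of relation $r = \mathrm{mdr}(f)$. The key input is that, after Dimca's reinterpretation recalled just after Theorem \ref{unexp}, $a_Z = \mathrm{mdr}(f) = r$ whenever $r < (d-1)/2$ (the ``balanced/unbalanced'' dichotomy for the splitting type), together with the constraint $a_Z + b_Z = |Z| - 1 = d-1$. So I would split into the same two cases as Theorem \ref{thm:defect}.

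First I would treat the case $r < d/2$ (more precisely $r \le (d-2)/2$, so that $a_Z = r$ and $b_Z = d-1-r$). Here Theorem \ref{thm:defect}(a) gives $\nu(C) = (d-1)^2 - r(d-r-1) - \tau(C)$. Since $a_Z b_Z = r(d-1-r) = r(d-r-1)$, this is exactly $(d-1)^2 - \tau(C) - a_Z b_Z$, so the formula holds on the nose. Second, in the balanced case $r \ge (d-1)/2$ I would use that the splitting type is as balanced as possible: $a_Z = \lfloor (d-1)/2 \rfloor$ and $b_Z = \lceil (d-1)/2 \rceil$, so that $a_Z b_Z = \lfloor (d-1)/2 \rfloor \lceil (d-1)/2 \rceil = \lfloor (d-1)^2/4 \rfloor$. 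Then $(d-1)^2 - \lfloor (d-1)^2/4 \rfloor = \lceil \tfrac{3}{4}(d-1)^2 \rceil$, which matches Theorem \ref{thm:defect}(b). Hence again $\nu(C) = \lceil \tfrac{3}{4}(d-1)^2\rceil - \tau(C) = (d-1)^2 - \tau(C) - a_Z b_Z$.

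The two cases overlap at $r = (d-1)/2$ (when $d$ is odd) and $r$ near $(d-2)/2$, and one should check the boundary values of $r$ are handled consistently by both formulas; this is the one genuinely fiddly point, since Dimca's statement has the slightly awkward overlap ($r < d/2$ versus $r \ge (d-2)/2$) and one must verify the splitting type is genuinely balanced precisely in the regime where formula (b) applies. I would resolve this by invoking the precise statement that $\mathcal{D}_0$ restricted to a generic line has splitting type $(a_Z, b_Z)$ with $a_Z = \min(r, \lfloor (d-1)/2\rfloor)$ — this is exactly the content of the du Plessis–Wall / Dimca description of generic splitting — so the case division for the splitting type coincides with Dimca's case division for $\nu$, and the identity follows termwise.

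The main obstacle is not any deep computation but rather pinning down and citing correctly the statement $a_Z = \min(\mathrm{mdr}(f), \lfloor (d-1)/2\rfloor)$, since the excerpt only states the weaker fact $a_Z + b_Z = d-1$ and the remark that $a_Z$ ``may be replaced by $\mathrm{mdr}(f)$'' in Dimca's unexpected-curves setting; one wants this in the full generality of reduced curves, which is precisely \cite[Corollary 3.3 ff.]{abe-dimca} or the earlier results of Dimca on the freeness defect. Granting that, the proof is a two-line algebraic identity in each case plus the floor/ceiling check $\lfloor n/2\rfloor\lceil n/2\rceil = \lfloor n^2/4\rfloor$ with $n = d-1$.
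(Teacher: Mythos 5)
The paper offers no proof of this statement --- it is quoted verbatim as \cite[Corollary 3.3]{abe-dimca} --- so there is no internal argument to compare yours against; judged on its own, your derivation is correct and is essentially the standard way the cited result is obtained. The arithmetic in both regimes checks out: when $a_Z=r$ one has $a_Zb_Z=r(d-1-r)$, matching part a) of Theorem \ref{thm:defect}, and in the balanced regime $\lfloor (d-1)/2\rfloor\cdot\lceil (d-1)/2\rceil=\lfloor (d-1)^2/4\rfloor$ and $(d-1)^2-\lfloor (d-1)^2/4\rfloor=\lceil \tfrac34 (d-1)^2\rceil$, matching part b); the overlap of the two cases of Theorem \ref{thm:defect} is consistent (at $r=(d-2)/2$ resp.\ $r=(d-1)/2$ both formulas return the same value), so the ``fiddly point'' you flag discharges itself. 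One harmless imprecision: for $d$ odd, ``$r<d/2$'' permits $r=(d-1)/2$, which is \emph{not} $\leq (d-2)/2$; but there $a_Z=r=\lfloor(d-1)/2\rfloor$ anyway, so both of your cases give the same answer. The only genuine external input --- which you correctly isolate --- is $a_Z=\min\bigl(\mathrm{mdr}(f),\lfloor (d-1)/2\rfloor\bigr)$ for a generic line, valid for arbitrary reduced plane curves; this is exactly the main splitting-type theorem of \cite{abe-dimca} and cannot be extracted from what the paper recalls (which only gives $a_Z+b_Z=d-1$ and the line-arrangement remark after Theorem \ref{unexp}). If you want to avoid a bare citation, it follows from noting that a sub-line-bundle $\mathcal{O}(-k)\hookrightarrow\mathcal{D}_0$ is the same as a syzygy of degree $k$, so $\mathcal{D}_0$ is semistable iff $r\geq (d-1)/2$; Grauert--M\"ulich then forces the balanced splitting in the semistable case, while in the unstable case the Harder--Narasimhan sequence $0\to\mathcal{O}(-r)\to\mathcal{D}_0\to\mathcal{I}_W(-(d-1-r))\to 0$ splits on a generic line since $d-1-2r>0$. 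With that reference or argument supplied, your proof is complete.
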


The following examples, together with Theorem \ref{unexp},
show the existence of unexpected curves of type $U(2,j,j-1)$
for \emph{plus-one} and for $3$-syzygy line arrangements. 

\begin{example}
\label{ex:18pts}
    Consider the set $Z \subset \mathbb{P}^{2}_{\mathbb{C}}$ of $18$ points
   $$Z=\{(0,1,0), (-1,1,0), (-2,1,0), (-3,1,0), (-3,2,0), (4,0,-1), (1,1,-1),$$
	$$(2,1,-1), (3,1,-1), (4,1,-1), (0,2,-1), (1,2,-1), (2,2,-1), $$
	$$(0,3,-1), (1,3,-1), (-2,3,-1), (-1,3,-1), (-2,4,-1)\},
   $$
   and the dual line arrangement ${\cal A}_Z$ with its defining polynomial $Q$.
   The minimal free resolution of the Milnor algebra $M(Q)$ is
$$0 \rightarrow S(-30) \rightarrow S(-29) \oplus S(-28) \oplus S(-24) \rightarrow S(-17)^{3} \rightarrow S,$$
from which we see the exponents $(d_1,d_2,d_3)=(7,11,12)$. Since one haa $d_1+d_2=18=d$, $\mathcal{A}_{Z}$ is a \emph{plus-one} generated arrangement of level $12$. This line arrangement has $22$ double, $13$ triple, $7$ quadruple, and $5$ quintuple points, so the total Tjurina number of curve $C$ given by $Q=0$ is
$$\tau(C)=22+13\cdot4+7\cdot9+5\cdot16=217.$$
By $c)$ of Theorem \ref{COR10}, $\nu(C)=d_3-d_2+1=2$ and therefore by Theorem \ref{th:splittingTypeConnection} we get $a_Z b_Z=70$. Together with $a_Z + b_Z = 17$, we finally obtain
$a_Z=7$ and $b_Z=10$. Now Theorem \ref{unexp} justifies the existence of unexpected curves of type $U(2,j,j-1)$, for $j \in\{8,9\}$.
\end{example}
\begin{example}
\label{ex:17pts}
Let $Z'$ be a set of $17$ points defined as $Z\setminus\{(-2,1,0)\}$, where $Z$ is the set from Example \ref{ex:18pts}. Similarly, as in the previous example, we can calculate the minimal free resolution of the Milnor algebra which has the following form:
$$0 \rightarrow S(-29) \rightarrow S(-27) \oplus S(-27) \oplus S(-23) \rightarrow S(-16)^{3} \rightarrow S.$$
    We see that $d_1+d_2=18>d=17$, so it is a $3$-syzygy line arrangement, which is not  \emph{plus-one} generated. The arrangement has $20$ double, $14$ triple, $9$ quadruple, and $2$ quintuple intersection points, so the total Tjurina number of $C$ is equal to
$$\tau(C)=20+14\cdot4+9\cdot9+2\cdot16=189.$$
In this example we combine Theorem \ref{th:splittingTypeConnection} and Theorem \ref{thm:defect} in order to get
$$r(d-r-1)=a_Z b_Z=7\cdot 9=63.$$ 
This leads to the following splitting type $(a_Z,b_Z)=(7,9)$. In consequence, we obtain an unexpected curve of type $U(2,8,7)$.

\begin{figure}
    \centering
    \begin{tikzpicture}[line cap=round,line join=round,>=triangle 45,x=7.0cm,y=7.0cm]
    \clip(-0.6477254956195181,-0.10829882019944272) rectangle (1.1456908411203948,1.1087580062279405);
    \draw [line width=1.pt,domain=-0.6477254956195181:1.1456908411203948] plot(\x,{(-0.-1.*\x)/-0.3333333333333333});
    \draw [line width=1.pt,dashed,domain=-0.6477254956195181:1.1456908411203948] plot(\x,{(-0.-1.*\x)/-0.5});
    \draw [line width=1.pt,domain=-0.6477254956195181:1.1456908411203948] plot(\x,{(-0.-1.*\x)/-1.});
    \draw [line width=1.pt,domain=-0.6477254956195181:1.1456908411203948] plot(\x,{(-0.-1.*\x)/-0.6666666666666666});
    \draw [line width=1.pt,domain=-0.6477254956195181:1.1456908411203948] plot(\x,{(-0.-0.*\x)/1.});
    \draw [line width=1.pt,domain=-0.6477254956195181:1.1456908411203948] plot(\x,{(-0.5-1.*\x)/-2.});
    \draw [line width=1.pt,domain=-0.6477254956195181:1.1456908411203948] plot(\x,{(-0.5-1.*\x)/-1.5});
    \draw [line width=1.pt,domain=-0.6477254956195181:1.1456908411203948] plot(\x,{(-1.-1.*\x)/-3.});
    \draw [line width=1.pt,domain=-0.6477254956195181:1.1456908411203948] plot(\x,{(--1.-1.*\x)/3.});
    \draw [line width=1.pt,domain=-0.6477254956195181:1.1456908411203948] plot(\x,{(--1.-1.*\x)/2.});
    \draw [line width=1.pt,domain=-0.6477254956195181:1.1456908411203948] plot(\x,{(--1.-1.*\x)/1.});
    \draw [line width=1.pt,domain=-0.6477254956195181:1.1456908411203948] plot(\x,{(--0.5-1.*\x)/0.5});
    \draw [line width=1.pt,domain=-0.6477254956195181:1.1456908411203948] plot(\x,{(--0.5-1.*\x)/1.});
    \draw [line width=1.pt,domain=-0.6477254956195181:1.1456908411203948] plot(\x,{(--0.3333333333333333-1.*\x)/0.3333333333333333});
    \draw [line width=1.pt,domain=-0.6477254956195181:1.1456908411203948] plot(\x,{(--0.25-1.*\x)/0.25});
    \draw [line width=1.pt,domain=-0.6477254956195181:1.1456908411203948] plot(\x,{(--0.5-0.*\x)/1.});
    \draw [line width=1.pt,domain=-0.6477254956195181:1.1456908411203948] plot(\x,{(--0.3333333333333333-0.*\x)/1.});
    \draw [line width=1.pt] (0.25,-0.10829882019944272) -- (0.25,1.1087580062279405);
    \end{tikzpicture}
    \caption{Affine part of configurations of lines from Example \ref{ex:18pts}, and Example \ref{ex:17pts}. Line define by equation $-2x+y=0$ is indicated as the dashed line.}
    \label{fig:19lines}
\end{figure}
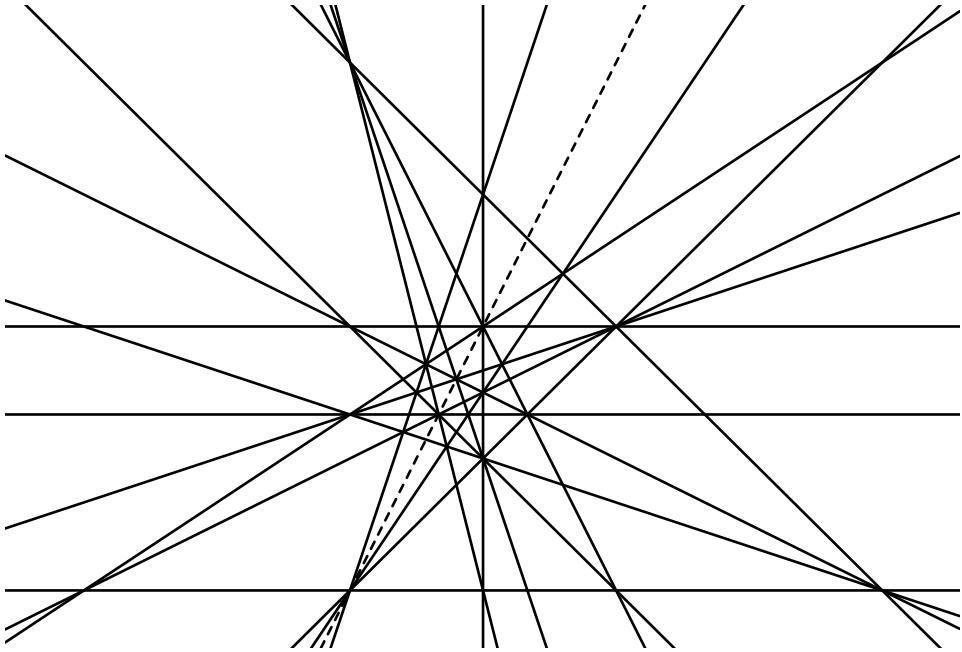
\end{example}
To conclude this section we want to present a whole family of \emph{plus-one} generated arrangements, for which we have unexpected curves.
\begin{example}
    Consider arrangement $\mathcal{NF}_{n}$, defined at the beginning of Section \ref{sec:DelFerArr}. If we remove one line from $\mathcal{NF}_{n}$ we obtain a \emph{plus-one} generated arrangement defined by polynomial $F$, with the following minimal free resolution of $M(F)$:
   $$0 \rightarrow S(-5n+4) \rightarrow S(-5n+5) \oplus S(-5n+6) \oplus S(-4n+2) \rightarrow S^{3}(-3n+3) \rightarrow S.$$
   
   It is not necessary to write down the full minimal free resolution in order to check whether the dual set of points $Z$ has the $U(2,d,m)$-property. If we apply \cite[Proposition 2.12]{ADS19} to $\mathcal{NF}_{n}$, we get immediately that $\textrm{mdr}(F)=n+1$. Then, using Theorem \ref{unexp}, we see that wherever $n \geq 7$ our arrangement given by $F$ admits an unexpected curve of type  $U(2,j,j-1)$, for $j \in \{n+2,\ldots, 2n-5 \}$.
\end{example}

\section{MacLane arrangement of conics via flat extensions}\label{sec:maclane}

Let us consider the MacLane arrangement of $8$ lines in $\mathbb{P}^{2}_{\mathbb{C}}$ given by the following defining equation:
$$Q(x,y,z) = (x^{2} + xy + y^{2})(y^{3}-z^{3})(z^{3}-x^{3}).$$
Our aim is to use a certain flat extension that allows us to construct an arrangement of conics based on the MacLane arrangement.
Consider the following map
$$\Psi: \mathbb{P}^{2}_{\mathbb{C}} \ni (x:y:z) \mapsto (x^{2} + 2y^{2} : y^{2} + 3z^{2} : z^{2} + 5x^{2}) \in \mathbb{P}^{2}_{\mathbb{C}}.$$
It is easy to observe that $\Psi$ is well-defined $4:1$ morphism. Even more is true, namely the set $$E = \{x^{2} + 2y^{2}, y^{2} + 3z^{2}, z^{2} + 5x^{2}\}$$ forms a regular sequence which defines a flat extension.
Our object of studies is an arrangement which is defined by the following equation
$$\Psi^{*}(Q) = 0.$$
As we can see, $\Psi^{*}(Q)$ decomposes into $8$ conics:
\begin{equation*}
\begin{array}{l}
\Psi_{1}(x,y,z) = x^{2}-\frac{1}{2}y^{2}+\frac{1}{4}z^{2}, \\
\Psi_{2}(x,y,z) = x^{2} -\frac{1}{5}y^{2} - \frac{2}{5}z^{2}, \\
\Psi_{3}(x,y,z) =x^{2}+ (e+3)\cdot y^{2}+(3e+3)\cdot z^{2}, \\
\Psi_{4}(x,y,z) = x^{2} +(-e+2)\cdot y^{2} -3e z^{2}, \\
\Psi_{5}(x,y,z) = x^{2}+\bigg(\frac{1}{5}e+ \frac{1}{4}\bigg)\cdot y^{2} + \bigg(\frac{3}{5}e+ \frac{4}{5}\bigg)\cdot z^{2}, \\
\Psi_{6}(x,y,z) = x^{2}- \frac{1}{5} e y^{2} +\bigg(-\frac{3}{5}e+\frac{1}{5}\bigg)\cdot z^{2}, \\
\Psi_{7}(x,y,z) = x^{2}+\bigg(-\frac{10}{31}e+\frac{2}{31}\bigg)\cdot y^{2}+\bigg( \frac{1}{31}e+ \frac{6}{31}\bigg)\cdot z^{2}, \\
\Psi_{8}(x,y,z) =x^{2}+\bigg(\frac{10}{31}e+\frac{12}{31}\bigg)\cdot y^{2}+\bigg(-\frac{1}{31}e+\frac{5}{31}\bigg)\cdot z^{2}, \\
\end{array}
\end{equation*}
where $e$ is a  primitive third root of $1$. Each $\Psi_{i}$ defines a smooth conic in $\mathbb{P}^{2}_{\mathbb{C}}$ and we can show that in fact $\Psi^{*}(Q)$ defines an arrangement of $8$ conics having  $t_{2} = 4\cdot 4 = 16$ and $t_{3} = 4 \cdot 8 = 32$. It is worth noticing that all singular points are quasihomogeneous (and ordinary). We call the arrangement defined by $\Psi^{*}(Q)$ \textit{MacLane arrangement of 8 conics}.
The minimal resolution of the Milnor algebra $M(\Psi^{*}(Q))$ has the following form:
$$0\rightarrow S^{3}(-31) \oplus S(-27) \rightarrow S^{3}(-30) \oplus S^{3}(-25) \rightarrow S(-15)^{3} \rightarrow S,$$
so MacLane arrangement of conics is not nearly free. In fact, this arrangement is far from the class of nearly free arrangement since the defect is equal to
$$\nu(C) = \bigg\lceil \frac{3}{4} \cdot 15^{2} \bigg\rceil - 16 - 4 \cdot 32 = 25.$$

We made an extensive experimental search in order to find an appropriate (and natural) flat extension which would potentially allow us to construct a nearly free arrangement of $8$ conics from the MacLane arrangement of $8$ lines, but we failed. It would be extremely interesting to find a class of projective morphisms that could lead to nearly free arrangements of curves starting from nearly free (or free) line arrangements.

\section*{Acknowledgments}
We would like to warmly thank Giovanna Ilardi and Tomasz Szemberg for useful comments that allowed us to improve this note. The second author would like to thank Hal Schenck for insightful discussions about the content of the paper and beyond.
The first and second authors were partially supported by National Science Center (Poland) Sonata Grant Nr \textbf{2018/31/D/ST1/00177}.

Finally, we would like to thank an anonymous referee for careful reading our manuscript and for comments that allowed us to improve the paper.


Address: \\
Grzegorz Malara: Department of Mathematics, Pedagogical University of Cracow, 	Podchor\c a\.zych 2, PL-30-084 Krak\'ow, Poland. Email: grzegorz.malara$@$up.krakow.pl, grzegorzmalara@gmail.com \\\\
Piotr Pokora: Department of Mathematics, Pedagogical University of Cracow, 	Podchor\c a\.zych 2, PL-30-084 Krak\'ow, Poland. Email: piotr.pokora$@$up.krakow.pl, piotrpkr$@$gmail.com \\\\
Halszka Tutaj-Gasi\'nska: Faculty of Mathematics and Computer Science, Jagiellonian University, Stanis{\l}awa {\L}ojasiewicza 6, 30-348 Kraków,
halszka.tutajgasinska@gmail.com

\begin{thebibliography}{000}

\bibitem{abe18}
T. Abe, Plus-one generated and next to free arrangements of hyperplanes. \textit{Int. Math. Res. Not. IMRN}, \url{https://doi.org/10.1093/imrn/rnz099}.
    
\bibitem{abe-dimca}
T. Abe and A. Dimca, Splitting types of bundles of logarithmic vector fields. \textit{Int. J. Math.} \textbf{29(8)}: Article ID 1850055, 20 p. (2018). 

\bibitem{ADS19}
    T. Abe, A. Dimca, and G. Sticlaru, Addition–deletion results for the minimal degree of logarithmic derivations of hyperplane arrangements and maximal Tjurina line arrangements. \textit{J. Algebraic Combin.}, \url{https://doi.org/10.1007/s10801-020-00986-9}.
    
    
\bibitem{Artal}
E. Artal Bartolo, L. Gorrochategui, I. Luengo, and A. Melle-Hern\'andez, On some conjectures about free and nearly free divisors. Decker, Wolfram (ed.) et al., \textit{Singularities and computer algebra. Festschrift for Gert-Martin Greuel on the occasion of his 70th birthday}. Based on the conference, Lambrecht (Pfalz), Germany, June 2015. Cham: Springer (ISBN 978-3-319-28828-4/hbk; 978-3-319-28829-1/ebook). 1 -- 19 (2017).

\bibitem{cdi}
 A. Cerminara, A. Dimca, and G. Ilardi,
On the Hilbert vector of the Jacobian module of a plane curve. \textit{Portugal. Math.}  \textbf{76(3/4)}: 311 –- 325 (2019).

\bibitem{CHMN}
D. Cook II, B. Harbourne, J. Migliore, and U. Nagel,
	Line arrangements and configurations of points with an unusual geometric property. \textit{Compositio Math.} \textbf{154}: 2150 -- 2194 (2018).
	
	
	\bibitem{DGPS}
 W. Decker, G-M. Greuel, G.~Pfister, H.~Sch{\"o}nemann, 
Singular {4-2-0} --- {A} computer algebra system for polynomial computations.
http://www.singular.uni-kl.de (2019).
	
\bibitem{Dimca}
A. Dimca,  \textit{Hyperplane arrangements. An introduction}. Universitext. Cham: Springer (ISBN 978-3-319-56220-9/pbk; 978-3-319-56221-6/ebook). xii, 200 p. (2017). 
\bibitem{Dimca1}
A. Dimca, On rational cuspidal plane curves, and the local cohomology of Jacobian rings. \textit{Comment. Math. Helv.} \textbf{94}: 689 -- 700 (2019).

\bibitem{Dimca2}
A. Dimca, Free and nearly free curves from conic pencils. \textit{J. Korean Math. Soc.} \textbf{55(3)}: 705 -- 717 (2018). 
\bibitem{dimca-unexp}
A. Dimca, Unexpected curves in $\P^2$, line arrangements, and minimal degree of Jacobian relations. \textbf{arXiv:1911.07703}.
\bibitem{DIM}
A. Dimca, D. Ibadula, and A. M\u acinic, Freeness for 13 lines arrangements is combinatorial. \textit{Discrete Math.} \textbf{342(8)}: 2445 -- 2453 (2019).
\bibitem{DimcaSticlaru}
A. Dimca and G. Sticlaru, Free and Nearly Free Curves vs. Rational Cuspidal Plane Curves. \textit{Publ. Res. Inst. Math. Sci.} \textbf{54(1)}: 163 -- 179 (2018). 




\bibitem{triple}
M.~Dumnicki, \L .~Farnik, A.~G\l \'owka, M.~Lampa-Baczy\'nska, G.~Malara, T.~Szemberg, J.~Szpond and H.~Tutaj-Gasi\'nska, Line arrangements with the maximal number of triple points. \textit{Geom. Dedicata} \textbf{180(1)}: 69 -- 83 (2016). 

\bibitem{eisenbud} D.~Eisenbud, \textit{The geometry of syzygies. A second course in commutative algebra and algebraic geometry}.
Graduate Texts in Mathematics 229. New York, NY: Springer (ISBN 0-387-22232-4/pbk; 0-387-22215-4/hbk). xvi, 243 p. (2005).

\bibitem{Gr}
A.  Grothendieck, Sur la classification des fibrés holomorphes sur la sphère de Riemann. \textit{Amer. J. Math.} \textbf{79}: 121 –- 138  (1957).

\bibitem{G-P}
 G-M. Greuel and G.Pfister, \textit{A Singular Introduction to Commutative Algebra}. Springer Science \& Business Media, 2012.

\bibitem{HS12} S. H. Hassanzadeh and A. Simis, Plane Cremona maps: Saturation and regularity of the base ideal. \textit{J. Algebra} {\bf 371}: 620 -- 652 (2012).

\bibitem{Klein}
F.  Klein,  Weitere  Untersuchungen \"uber  das  Ikosaeder. In  \emph{Gesammelte   Math.    Abhandlungen     Bd.   2},   Springer-Verlag,  Berlin, pp.  321 -- 384. 

\bibitem{MarVal}
S. Marchesi and J. Vall\`es, Nearly free curves and arrangements: a vector bundle point of view. \textit{Math. Proc. of the Cambridge Phil. Soc.}, \url{DOI:10.1017/S0305004119000318}.

\bibitem{megyesi} G. Megyesi, Configurations of conics with many tacnodes. \textit{Tohoku Math. J.} \textbf{52}: 555 -- 577 (2000). 

\bibitem{Milnor}
J. Milnor, \textit{Singular Points of Complex Hypersurfaces}. Annals of Mathematics Studies. Princeton: Princeton University Press, Princeton (1968).

\bibitem{OT92}
P. Orlik and H. Terao,  \textit{Arrangements of hyperplanes}. Grundlehren der Mathematischen Wissenschaften. 300. Berlin: Springer-Verlag. xviii, 325 p. (1992). 

\bibitem{PR}
P. Pokora and J. Ro\'e, Harbourne constants, pull-back clusters and ramified morphisms. \textit{Result. Math.} \textbf{74(3)}:  Paper No. 109, 24 p. (2019). 
\bibitem{Saito}
K. Saito, Quasihomogene isolierte Singularit\"aten von Hyperfl\"achen. \textit{Inventiones Math.} \textbf{14}: 123 -- 142 (1971).
\bibitem{Schenck1}
H. Schenck and S. Toh\v aneanu, Freeness of conic-line arrangements in $\mathbb{P}^{2}$. \textit{Comment. Math. Helv.} \textbf{84(2)}: 235 -- 258 (2009).  
\bibitem{Schenck2}
H. Schenck, H. Terao, and M. Yoshinaga, Logarithmic vector fields for curve configurations in $\mathbb{P}^{2}$ with quasihomogeneous singularities. \textit{Math. Res. Lett.} \textbf{25(6)}: 1977 -– 1992 (2018).
\bibitem{Terao}
H. Terao, Generalized exponents of a free arrangement of hyperplanes and Shepard-Todd-Brieskorn formula. \textit{Invent. Math.} \textbf{63}: 159 -- 179 (1981).

\bibitem{Trok}
B. Trok, Projective Duality, Unexpected Hypersurfaces and Logarithmic Derivations of Hyperplane Arrangements. \bf{arXiv:2003.02397}

\end{thebibliography}
\end{document}